\newtheorem{theorem}{Theorem}[section]
\newtheorem{lemma}{Lemma}[section]
\newtheorem{proposition}{Proposition}[section]
\newtheorem{definition}{Definition}[section]
\newtheorem{remark}{Remark}[section]
\newcommand{\bal}{\begin{align}}
\newcommand{\bbal}{\begin{align*}}
\newcommand{\beq}{\begin{equation}}
\newcommand{\eeq}{\end{equation}}
\newcommand{\bca}{\begin{cases}}
\newcommand{\eca}{\end{cases}}
\def\div{\mathord{{\rm div}}}
\newcommand{\pa}{\partial}
\newcommand{\fr}{\frac}
\newcommand{\na}{\nabla}
\newcommand{\De}{\Delta}
\newcommand{\cd}{\cdot}
\newcommand{\ep}{\varepsilon}
\newcommand{\dd}{\mathrm{d}}
\newcommand{\R}{\mathbb{R}}
\newcommand{\les}{\lesssim}
\newcommand{\f}{\left}
\newcommand{\g}{\right}
\newcommand{\Div}{\mathrm{div\,}}
\newcommand{\pr}{\\ &}
\begin{document}
\bibliographystyle{plain}
\title{Loss of Uniform Convergence for Solutions of the Navier--Stokes Equations in the Inviscid Limit}

\author{Jinlu Li$^{1}$, Yanghai Yu$^{2,}$\footnote{E-mail: lijinlu@gnnu.edu.cn; yuyanghai214@sina.com(Corresponding author); mathzwp2010@163.com} and Weipeng Zhu$^{3}$\\
\small $^1$ School of Mathematics and Computer Sciences, Gannan Normal University, Ganzhou 341000, China\\
\small $^2$ School of Mathematics and Statistics, Anhui Normal University, Wuhu 241002, China\\
\small $^3$ School of Mathematics and Big Data, Foshan University, Foshan, Guangdong 528000, China}

\date{\today}
\maketitle\noindent{\hrulefill}

{\bf Abstract:} In this paper, we consider the inviscid limit problem to the higher dimensional incompressible Navier--Stokes equations in the whole space. It is shown in \cite{GZ} that given initial data $u_0\in B^{s}_{p,r}$ and for some $T>0$, the solutions of the Navier--Stokes equations converge strongly in $L^\infty_TB^{s}_{p,r}$ to the Euler equations as the viscosity parameter tends to zero. We furthermore prove the failure of the uniform (with respect to the initial data) $B^{s}_{p,r}$ convergence in the inviscid limit of a family of solutions of the Navier–Stokes equations towards a solution of the Euler equations.

{\bf Keywords:} Navier--Stokes equations, Euler equations, Inviscid limit, Besov spaces

{\bf MSC (2010):} 35Q30, 76B03
\vskip0mm\noindent{\hrulefill}

\section{Introduction}
In this article, we consider the Cauchy problem for the incompressible Navier--Stokes equations
\begin{align*}
\rm{(NS)}\qquad\begin{cases}
\pa_t u+u\cdot \nabla u-\ep \Delta u+\nabla P=0,\\
\mathrm{div\,} u=0,\\
u(0,x)=u_0(x),
\end{cases}
\end{align*}
where the vector field $u(t,x):[0,\infty)\times {\mathbb R}^d\to {\mathbb R}^d$ stands for the velocity of the fluid, the quantity $P(t,x):[0,\infty)\times {\mathbb R}^d\to {\mathbb R}$ denotes
the scalar pressure, and $\mathrm{div\,} u=0$ means that the fluid is incompressible.
When the viscocity vanishes ($\ep=0$), then (NS) reduces to the Euler equations for ideal incompressible fluid
\begin{align*}
\rm{(Euler)}\qquad\begin{cases}
\pa_t u+u\cdot \nabla u+\nabla P=0, \\
\mathrm{div\,} u=0,\\
u(0,x)=u_0(x).
\end{cases}
\end{align*}
The mathematical study of both Navier--Stokes and Euler equations has a long and distinguished history (see Constantin's survey \cite{P} for more details) and the problems of global regularity for 3D equations are still challenging open problems. We do not detail the literature since it is huge and refer the readers to see the monographs of Majda-Bertozzi \cite{Majda} and Bahouri-Chemin-Danchin \cite{B.C.D} for the well-posedness results of both Navier--Stokes and Euler equations in Sobolev and Besov spaces respectively.

A classical problem in fluid mechanics is the approximation in the limit $\ep\to0$ of vanishing viscosity (also called inviscid limit) of solutions of the Euler equations by solutions of the incompressible Navier–Stokes equations.
The problem of the convergence of smooth viscous solutions of (NS) to the Eulerian one as $\ep\to0$ is
well understood and has been studied in many literatures. See for example \cite{Swann, Kato}, and \cite{CKV} for the inviscid limit on the bounded domain. Majda \cite{Majda} showed that under the assumption $u_0\in H^s$ with $s>\frac{d}{2}+2$, the solutions $u_\ep$ to (NS) converge in $L^2$ norm as $\ep\to 0$ to the unique solution of Euler equation and the convergence rate is of order $(\ep t)^{\fr12}$. Masmoudi \cite{M} improved the result and proved the convergence in $H^s$ norm under the weaker assumption $u_0\in H^s$ with $s>\frac{d}{2}+1$. In dimension two the results are global in time and were improved in \cite{HK} where the assumption is improved to $u_0\in B^2_{2,1}$ with convergence in $L^2$, and further generalized to other Besov spaces $B^{2/p+1}_{p,1}$ with convergence in $L^p$, see section 3.4 in \cite{MWZ}. In three dimension a similar result was proved in \cite{Wu} for axis-symmetric flows without swirl. Chemin \cite{chem} resolved inviscid limit of Yudovich type solutions with only the assumption that the vorticity is bounded. In the case of two dimensional (torus or the whole space) and rough initial data, by taking greater advantage of vorticity formulation, more beautiful results are obtained quantitatively (see for example \cite{Be,Ci,P2}). Recently, Bourgain-Li in \cite{B1,B2} employed a combination of Lagrangian and Eulerian techniques to obtain strong local ill-posedness results in borderline Besov spaces $B^{d/p+1}_{p,r}$ for $1\leq p<\infty$ and $1<r\leq\infty$ when $d=2,3$. Guo-Li-Yin \cite{GZ} solved the inviscid limit in the same topology.
For any $R>0$, from now on, we denote any bounded subset $U_R$ in $B^s_{p,r}(\mathbb{R}^d)$ by
$$U_R:=\left\{\phi\in B^s_{p,r}(\mathbb{R}^d): \|\phi\|_{B^s_{p,r}(\mathbb{R}^d)}\leq R,\;\mathrm{div\,} \phi=0\right\}.$$
In \cite{GZ}, Guo-Li-Yin obtained
\begin{theorem}\label{th1} Let $d\geq 2$ and $\ep\in [0,1]$. Assume that $(s,p,r)$ satisfies
\begin{align}\label{con1}
s>\frac{d}{p}+1,\; (p,r)\in [1,\infty]\times [1,\infty) \quad \mbox{   or   } \quad
s=\frac{d}{p}+1, \;(p,r)\in [1,\infty]\times\{1\}.
\end{align}
Given initial data $u_0\in U_R$, there exists $T=T(R,s,p,r,d)>0$ such that (NS) has a unique solution $\mathbf{S}_{t}^\ep(u_0)\in C([0,T];B^s_{p,r})$. Moreover, there holds

(1)\; Uniform bounds: there exists $C=C(R,s,p,r,d)>0$ such that
\begin{align*}
\f\|\mathbf{S}_{t}^\ep(u_0)\g\|_{L_T^\infty B^s_{p,r}}\leq C, \quad \forall\ \ep\in [0,1].
\end{align*}
Moreover, if $u_0\in B^{\gamma}_{p,r}$ with $\gamma>s$, then $\exists\ C_2=C_2(R, \gamma,s,p,r,d)>0$
\begin{align*}
\f\|\mathbf{S}_{t}^\ep(u_0)\g\|_{L_T^\infty B^\gamma_{p,r}}\leq  C_2\|u_0\|_{B^\gamma_{p,r}}.
\end{align*}

(2) Inviscid limit: given $u_0\in U_R$, $\forall \eta>0$, $\exists\ \ep_0=\ep_0(u_0,\eta,T)$, $\forall \ep\leq\ep_0$: $\f\|\mathbf{S}_{t}^\ep(u_0)-\mathbf{S}_{t}^0(u_0)\g\|_{L_T^\infty B^s_{p,r}}\leq \eta$, namely,
\begin{align}\label{inv}
\lim_{\ep\to 0}\f\|\mathbf{S}_{t}^\ep(u_0)-\mathbf{S}_{t}^0(u_0)\g\|_{L_T^\infty B^s_{p,r}}=0.
\end{align}
\end{theorem}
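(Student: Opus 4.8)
The plan is to prove Theorem~\ref{th1} with the Littlewood--Paley calculus for transport--diffusion equations, the one point requiring vigilance being that every constant stay independent of $\ep\in[0,1]$; the three assertions then reduce to an \emph{a priori} estimate closing in $B^s_{p,r}$, a linear estimate at higher regularity, and a low-regularity difference estimate upgraded by interpolation.

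For part~(1) I would localize (NS) in frequency: applying $\De_j$ produces a transport--diffusion equation for $\De_j u$ with transport field $u$ and the viscous term $-\ep\De\De_j u$ carrying a favourable sign. The fundamental transport--diffusion estimate (Bahouri--Chemin--Danchin) then controls $\|u\|_{B^s_{p,r}}$ in terms of its initial value, a Gronwall factor $\exp\big(C\int_0^t\|\na u\|_{L^\infty}\,\dd\tau\big)$ with $\ep$-independent $C$, and the forcing from the pressure and the commutator $[\De_j,u\cd\na]u$. Recovering the pressure from $-\De P=\div(u\cd\na u)$ expresses $\na P$ through Calder\'on--Zygmund operators bounded on $B^s_{p,r}$, so $\|\na P\|_{B^s_{p,r}}\les\|u\cd\na u\|_{B^s_{p,r}}\les\|u\|_{B^s_{p,r}}^2$, using that $B^s_{p,r}$ is an algebra embedded in the Lipschitz class under~(\ref{con1}); with the standard commutator estimate this closes $\tfrac{\dd}{\dd t}\|u\|_{B^s_{p,r}}\les\|u\|_{B^s_{p,r}}^2$, and a continuation argument gives a lifespan $T\sim(CR)^{-1}$ together with the uniform bound. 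The higher-regularity claim follows because, once $\|\na u\|_{L^\infty}\les\|u\|_{B^s_{p,r}}$ is under control, the $B^\ga_{p,r}$ estimate is \emph{linear}, $\tfrac{\dd}{\dd t}\|u\|_{B^\ga_{p,r}}\les\|u\|_{B^s_{p,r}}\|u\|_{B^\ga_{p,r}}$, so Gronwall yields $\|u\|_{L^\infty_TB^\ga_{p,r}}\le C_2\|u_0\|_{B^\ga_{p,r}}$.

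For part~(2) I would first treat smooth data. With $v=\mathbf{S}_t^\ep(u_0)-\mathbf{S}_t^0(u_0)=:u^\ep-u^0$, subtracting the two systems gives
\begin{align*}
\pa_t v+u^\ep\cd\na v+v\cd\na u^0-\ep\De v+\na(P^\ep-P^0)=\ep\De u^0,\qquad v|_{t=0}=0,
\end{align*}
so that $\ep\De u^0$ is precisely the price of the vanishing viscosity. I would estimate $v$ one derivative below the data, in $B^{s-1}_{p,r}$, where the transport term, the product $v\cd\na u^0$ and the pressure difference all close without derivative loss (using that $B^{s-1}_{p,r}$ is still an algebra at these indices, including the critical space $B^{d/p}_{p,1}$ in the endpoint case), leaving the source $\ep\|\De u^0\|_{B^{s-1}_{p,r}}\les\ep\|u^0\|_{B^{s+1}_{p,r}}$. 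If $u_0\in B^{s+1}_{p,r}$, part~(1) bounds $\|u^0\|_{L^\infty_TB^{s+1}_{p,r}}$ uniformly in $\ep$, and Gronwall gives $\|v\|_{L^\infty_TB^{s-1}_{p,r}}\les\ep$. Interpolating against the uniform bound $\|v\|_{L^\infty_TB^{s+1}_{p,r}}\les C_2\|u_0\|_{B^{s+1}_{p,r}}$ through $\|v\|_{B^s_{p,r}}\les\|v\|_{B^{s-1}_{p,r}}^{1/2}\|v\|_{B^{s+1}_{p,r}}^{1/2}$ promotes this to the top topology, $\|v\|_{L^\infty_TB^s_{p,r}}\les\ep^{1/2}$.

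To reach arbitrary $u_0\in U_R$ I would mollify, taking $u_0^N=S_Nu_0\in B^{s+1}_{p,r}$, and split
\begin{align*}
\mathbf{S}_t^\ep(u_0)-\mathbf{S}_t^0(u_0)
&=\big(\mathbf{S}_t^\ep(u_0)-\mathbf{S}_t^\ep(u_0^N)\big)\\
&\quad+\big(\mathbf{S}_t^\ep(u_0^N)-\mathbf{S}_t^0(u_0^N)\big)+\big(\mathbf{S}_t^0(u_0^N)-\mathbf{S}_t^0(u_0)\big).
\end{align*}
For each fixed $N$ the middle term tends to $0$ as $\ep\to0$ by the smooth-data estimate. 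The two outer terms are governed by the continuous dependence of the solution map in $B^s_{p,r}$, which must hold \emph{uniformly in} $\ep$: since $u_0^N\to u_0$ in $B^s_{p,r}$ as $N\to\infty$, they become small uniformly in $\ep$ once $N$ is large. Fixing $N=N(u_0,\eta)$ first and then $\ep_0=\ep_0(N,\eta,T)$ yields~(\ref{inv}) with exactly the stated dependence (and explains why $\ep_0$ cannot be expected uniform over $U_R$). I expect this last step to be the main obstacle: a naive difference estimate in $B^s_{p,r}$ loses a derivative, so the uniform-in-$\ep$ continuity of $u_0\mapsto\mathbf{S}_t^\ep(u_0)$ at the regularity of the data requires a Bona--Smith-type argument---low-norm difference combined with the high-norm smoothing of the mollified data---carried out with all constants independent of $\ep$, the borderline case $s=\tfrac{d}{p}+1,\ r=1$ being the most delicate since there the product and transport estimates must be used in their critical algebra form.
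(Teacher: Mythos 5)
Theorem~\ref{th1} is not proved in this paper at all: it is imported from Guo--Li--Yin \cite{GZ}, so there is no in-house argument to compare against. Your outline nevertheless follows essentially the route of \cite{GZ} and of the standard literature: $\ep$-uniform a priori estimates obtained from the transport--diffusion estimate (Lemma~\ref{cs}, whose constants are independent of $\ep\in[0,1]$), linear propagation of the higher $B^{\gamma}_{p,r}$ regularity once $\|\na u\|_{L^\infty}$ is controlled, an $O(\ep)$ difference estimate one derivative below the data for smoothed initial data, and a Bona--Smith/frequency-truncation argument to reach general $u_0\in U_R$ in the top norm. Your order of quantifiers --- fix $N=N(u_0,\eta)$ first, then $\ep_0=\ep_0(N,\eta,T)$ --- is the correct reading of why $\ep_0$ depends on $u_0$ and not only on $R$, which is precisely the non-uniformity the rest of the paper exploits.

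One step is wrong as literally written, though repairable. You dispose of the pressure by asserting that $\na P$ is recovered from $u\cd\na u$ through Calder\'on--Zygmund operators ``bounded on $B^s_{p,r}$''. The hypotheses \eqref{con1} include $p=1$ and $p=\infty$, where the Riesz transforms are not bounded on $L^p$, hence not on the nonhomogeneous space $B^s_{p,r}$ (the block $\De_{-1}$ carries a plain $L^p$ norm and the symbol of $R_iR_j$ is singular at $\xi=0$). The repair uses $\mathrm{div\,}u=0$ to write $\mathcal{Q}(u\cd\na u)=\na(-\De)^{-1}\pa_i\pa_j(u^iu^j)$, controlling the low frequencies through $\dot{B}^0_{p,1}\hookrightarrow L^p$ and the rest by Bony decomposition; this is exactly the content of Lemma~\ref{lem:P} of the present paper (with the endpoint $(s,p,r)=(1,\infty,1)$ delegated to Pak--Park). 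With that substitution your part (1) closes, and the same lemma is what legitimizes the difference and Bona--Smith estimates of part (2) at the endpoint values of $p$.
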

We should emphasize that the statement \eqref{inv} is a consequence of the strong convergence of the solutions to (NS) with the initial data in the bounded set $U_R$ of $B^s_{p,r}$, i.e., $\mathbf{S}_{t}^\ep(u_0)\to\mathbf{S}_{t}^0(u_0)$ in $L_T^\infty B^s_{p,r}$ for each $u_0\in U_R$.
An interesting problem appears: whether the convergence can be made uniformly with respect to the initial data?
In other words, for any $u_0\in U_R$, is the statement \eqref{inv} valid? We will give a negative answer. Our main result is the following.

\begin{theorem}\label{th2} Let $d\geq 2$ and $\ep\in [0,1]$. Assume that $(s,p,r)$ satisfies \eqref{con1}. For any $u_0\in U_R$, let $\mathbf{S}_{t}^{\ep_n}(u_0)$ and $\mathbf{S}_{t}^{0}(u_0)$ be the solutions of (NS) and (Euler) with the same initial data $u_0$, respectively. Then a family of solutions of the Navier–Stokes equations $\f\{\mathbf{S}_{t}^{\ep}(u_0)\g\}_{\ep>0}$
\begin{equation*}
\mathbf{S}_t^\ep:\begin{cases}
U_R \rightarrow \mathcal{C}([0, T] ; B_{p, r}^{s}),\\
u_0\mapsto \mathbf{S}_t^\ep(u_0),
\end{cases}
\end{equation*}
do not converge strongly in a uniform way with respect to initial data to the solution $\mathbf{S}_{t}^{0}(u_0)$ of (Euler) in $B^{s}_{p,r}$.
Moreover, there exists a sequence initial data $u^n_0\in U_R$, $\exists\ \eta_0>0$, $\forall \ep>0$, $\exists\ \ep_n\leq \ep$ such that for a short time $T_0\leq T$
$$
\liminf_{n\to \infty}\left\|\mathbf{S}_{t}^{\ep_n}(u^n_0)-\mathbf{S}_{t}^{0}(u^n_0)\right\|_{L^\infty_{T_0}B^s_{p,r}}\geq \eta_0.
$$
\end{theorem}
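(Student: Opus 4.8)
The plan is to produce a single sequence of initial data $u_0^n\in U_R$ together with viscosities $\ep_n=2^{-2n}$, and to show that the corresponding viscous and inviscid flows remain a fixed distance apart in $B^s_{p,r}$. Since $\ep_n\to0$, for every prescribed $\ep>0$ one has $\ep_n\le\ep$ for all large $n$, which is exactly the quantifier ordering in the statement. For the data I would take a \emph{high-frequency shear-type wave packet}: fix $\phi\in C_c^\infty(\R^d)$, set the stream function $\Psi_n=-2^{-n(s+1)}\phi(x)\cos(2^nx_2)$, and define
$$u_0^n=\big(\pa_2\Psi_n,\ -\pa_1\Psi_n,\ 0,\dots,0\big),$$
which is divergence free by construction. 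Its leading part is $2^{-ns}\phi(x)\sin(2^nx_2)\,e_1$, so that $\|u_0^n\|_{B^s_{p,r}}\sim1$ (rescale $\phi$ to enforce $\le R$) while all other contributions are $O(2^{-n(s+1)})$ in $B^s_{p,r}$. The decisive feature is that the velocity points along $e_1$ while the oscillation is carried by the transverse variable $x_2$; hence the dominant transport operator $u_0^{n,1}\pa_1$ never differentiates the fast oscillation, and a direct computation gives $\|u_0^n\cd\na u_0^n\|_{B^s_{p,r}}\lesssim 2^{-ns}\to0$, even though $\|u_0^n\|_{B^{s+1}_{p,r}}\sim2^n$.

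The lower bound would be read off from the triangle inequality
$$\|\mathbf{S}_t^{\ep_n}(u_0^n)-\mathbf{S}_t^{0}(u_0^n)\|_{B^s_{p,r}}\ \ge\ \|(e^{\ep_n t\De}-\mathrm{Id})u_0^n\|_{B^s_{p,r}}-\|\mathbf{S}_t^{\ep_n}(u_0^n)-e^{\ep_n t\De}u_0^n\|_{B^s_{p,r}}-\|\mathbf{S}_t^{0}(u_0^n)-u_0^n\|_{B^s_{p,r}}.$$
For the main term I localize to the dyadic block $j=n$: on the Fourier support of $u_0^n$ one has $|\xi|\sim 2^n$, so the multiplier $e^{-\ep_n t|\xi|^2}=e^{-t|\xi|^2 2^{-2n}}$ is comparable to $e^{-t}$ and its distance to $1$ stays bounded below for $t\in[T_0/2,T_0]$. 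Using $\|f\|_{B^s_{p,r}}\ge 2^{ns}\|\De_n f\|_{L^p}$ and $\|\De_n u_0^n\|_{L^p}\gtrsim 2^{-ns}$, this gives $\|(e^{\ep_n t\De}-\mathrm{Id})u_0^n\|_{B^s_{p,r}}\gtrsim 1-e^{-cT_0}=:2\eta_0>0$, uniformly in $n$. This is precisely where the scaling $\ep_n\sim 2^{-2n}$ is forced: it tunes the heat damping at frequency $2^n$ to be of order one.

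It then remains to prove that the two subtracted quantities are genuine error terms, i.e.\ that they tend to $0$ as $n\to\infty$, uniformly on $[0,T_0]$. By Duhamel's formula together with the uniform boundedness of $e^{\ep_n\tau\De}$ and of the Leray projector $\mathbb{P}$ on $B^s_{p,r}$,
$$\|\mathbf{S}_t^{\ep_n}(u_0^n)-e^{\ep_n t\De}u_0^n\|_{B^s_{p,r}}\ \le\ \int_0^{T_0}\|\mathbb{P}\big(\mathbf{S}_\tau^{\ep_n}(u_0^n)\cd\na\mathbf{S}_\tau^{\ep_n}(u_0^n)\big)\|_{B^s_{p,r}}\,\dd\tau,$$
and the Euler error $\|\mathbf{S}_t^{0}(u_0^n)-u_0^n\|_{B^s_{p,r}}$ is estimated by the same integral with $\ep_n=0$ and no semigroup. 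The crude product estimate $\|u\cd\na u\|_{B^s_{p,r}}\lesssim\|u\|_{B^s_{p,r}}\|u\|_{B^{s+1}_{p,r}}$ combined with the higher-regularity bound of Theorem~\ref{th1}(1) only yields $O(2^n)$, which is useless; the required smallness must instead come from the shear structure, exactly as at $\tau=0$.

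Thus the heart of the argument, and the step I expect to be the main obstacle, is to show that the structural smallness $\|\mathbf{S}_\tau^{\ep}(u_0^n)\cd\na\mathbf{S}_\tau^{\ep}(u_0^n)\|_{B^s_{p,r}}=o(1)$ is \emph{propagated} on a short interval $[0,T_0]$. I would do this by taking $u^{\mathrm{app},\ep}(\tau)=e^{\ep\tau\De}u_0^n$ as an approximate solution (for Euler, $u^{\mathrm{app},0}\equiv u_0^n$): the heat flow is diagonal in Fourier, hence preserves both the frequency localization at $2^n$ and the concentration of the velocity along $e_1$, and it solves the linear part of (NS) exactly, leaving only the residual $\mathbb{P}(u^{\mathrm{app}}\cd\na u^{\mathrm{app}})=O(2^{-ns})$. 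One then closes a Gronwall estimate in $B^s_{p,r}$ for $\mathbf{S}_\tau^{\ep}(u_0^n)-u^{\mathrm{app},\ep}(\tau)$, using the uniform $B^s_{p,r}$ bound of Theorem~\ref{th1}(1). The delicate point is that a generic error of size $\de$ can interact with the $O(2^n)$ transverse derivative $\pa_2 u^{\mathrm{app}}$ of the main profile, so a naive energy estimate loses the gain; one must exploit the anisotropy, tracking the small $e_2$-component (equivalently, the genuinely high-frequency part) of the error separately, so that $\pa_2$ is always paired with a small factor. Granting this propagation, both error terms are $\le\eta_0/2$ for $n$ large, and the displayed triangle inequality yields $\|\mathbf{S}_t^{\ep_n}(u_0^n)-\mathbf{S}_t^{0}(u_0^n)\|_{L^\infty_{T_0}B^s_{p,r}}\ge\eta_0$ for all large $n$, whence the stated $\liminf$ bound and the failure of uniform convergence.
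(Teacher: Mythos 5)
Your skeleton coincides with the paper's: a divergence-free wave packet spectrally supported at frequency $2^n$ with $\|u_0^n\|_{B^{s+k}_{p,r}}\approx 2^{kn}$, the tuning $\ep_n=2^{-2n}$ so that $1-e^{-\ep_n t|\xi|^2}$ has a definite size on the packet's Fourier support, and a triangle inequality isolating $\|(e^{\ep_n t\Delta}-\mathrm{Id})u_0^n\|_{B^s_{p,r}}$ as the main term. But your treatment of the error terms has a genuine gap, and it sits exactly where you say you expect ``the main obstacle''. You dismiss the product estimate as useless because you only invoke the pairing $\|u\|_{B^s_{p,r}}\|u\|_{B^{s+1}_{p,r}}=O(2^n)$, and you then make the whole argument rest on propagating the anisotropic cancellation $\|\mathbf{S}^{\ep}_\tau(u_0^n)\cdot\nabla \mathbf{S}^{\ep}_\tau(u_0^n)\|_{B^s_{p,r}}=o(1)$ over $[0,T_0]$ --- a step you do not carry out, and which is delicate for precisely the reason you identify (a generic $O(\delta)$ error feeds back through the $O(2^n)$ transverse derivative of the profile). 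Without that step the subtracted quantities are only $O(t)$, e.g.\ $\|\mathbf{S}^0_t(u_0^n)-u_0^n\|_{B^s_{p,r}}\le\int_0^t\|\mathcal{P}(\mathbf{S}^0_\tau(u_0^n)\cdot\nabla\mathbf{S}^0_\tau(u_0^n))\|_{B^s_{p,r}}\,\dd\tau=O(t)$, which is the same order as the main term $\approx t$, so the triangle inequality yields nothing.

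The paper closes the argument without any anisotropy. First, the refined product law \eqref{cj} of Lemma \ref{lp} gives $\|f\cdot\nabla g\|_{B^s_{p,r}}\lesssim\|f\|_{B^{s-1}_{p,r}}\|g\|_{B^{s+1}_{p,r}}+\|f\|_{B^s_{p,r}}\|g\|_{B^s_{p,r}}$; since $\|u_0^n\|_{B^{s-1}_{p,r}}\approx 2^{-n}$ and $\|u_0^n\|_{B^{s+1}_{p,r}}\approx 2^{n}$, and these norms are propagated by the flow (Lemma \ref{cs} plus Lemma \ref{lem:P}), the nonlinearity stays $O(1)$ in $B^s_{p,r}$ with no shear cancellation needed. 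Second, and crucially, one expands to \emph{second} order in $t$: $\mathbf{S}^{\ep_n}_t(u_0^n)=e^{t\ep_n\Delta}u_0^n-\int_0^te^{(t-\tau)\ep_n\Delta}\mathcal{P}\bigl(e^{\tau\ep_n\Delta}u_0^n\cdot\nabla e^{\tau\ep_n\Delta}u_0^n\bigr)\dd\tau+O(t^2)$ and $\mathbf{S}^{0}_t(u_0^n)=u_0^n-t\mathcal{P}(u_0^n\cdot\nabla u_0^n)+O(t^2)$ (Proposition \ref{pr3}); the two first-order Duhamel terms differ from one another by $O(t^2)$ because each integrand differs from $\mathcal{P}(u_0^n\cdot\nabla u_0^n)$ by $O(\tau)$ (Proposition \ref{pr2}). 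Every error is then $O(t^2)$ against a main term $\approx t$, and taking $T_0$ small gives the lower bound $c_0t$. You should either adopt this second-order expansion (your data satisfies the same norm asymptotics, so it would work verbatim) or supply a complete proof of the propagation of the anisotropic smallness; as written, the central step of your argument is missing.
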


For any $R>0$, we denote the $B_{p, r}^{s}$-neighbourhood of $u_0$ by
$$\mathcal{N}_{R}(u_0)=\f\{\phi\in B_{p, r}^{s}:\;\|\phi-u_0\|_{B_{p, r}^{s}}\leq R,\,\mathrm{div\,} \phi=\mathrm{div\,} u_0=0\g\}.$$

We have the following stronger result.
\begin{theorem}\label{th3} Let $d\geq 2$ and $\ep\in [0,1]$. Assume that $(s,p,r)$ satisfies \eqref{con1}. Given that $\psi\in B_{p, r}^{s}$, then a family of solutions of the Navier–Stokes equations $\f\{\mathbf{S}_{t}^{\ep}(\psi)\g\}_{\ep>0}$
\begin{equation*}
\mathbf{S}_t^\ep:\begin{cases}
\mathcal{N}_{R}(\psi) \rightarrow \mathcal{C}([0, T] ; B_{p, r}^{s}),\\
\psi\mapsto \mathbf{S}_t^\ep(\psi),
\end{cases}
\end{equation*}
do not converge strongly in a uniform way with respect to initial data to the solution $\mathbf{S}_{t}^{0}(\psi)$ of (Euler) in $B^{s}_{p,r}$.
Moreover, given that $\psi\in B_{p, r}^{s}$, there exists a sequence initial data $u^n_0\in U_R$, $\exists\ \eta_0>0$, $\forall \ep>0$ , $\exists\ \ep_n\leq \ep$ such that for a short time $T_0\leq T$
$$
\liminf_{n\to \infty}\left\|\mathbf{S}_{t}^{\ep_n}(\psi+u^n_0)-\mathbf{S}_{t}^{0}(\psi+u^n_0)\right\|_{L^\infty_{T_0}B^s_{p,r}}\geq \eta_0.
$$
\end{theorem}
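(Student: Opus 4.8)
The plan is to produce the failure of uniform convergence by a single-frequency perturbation argument: I will add to $\psi$ a divergence-free, high-frequency oscillation living at frequency $2^n$, and exploit the fact that the heat semigroup $e^{\ep t\De}$ damps such a frequency by the fixed factor $e^{-\ep 2^{2n}t}$, which is of order one precisely when $\ep\sim 2^{-2n}$, whereas the Euler flow merely transports it. Concretely, fix $\phi\in C_c^\infty(\R^d)$ with $\|\phi\|_{L^p}$ chosen so small that the fields below lie in the prescribed balls, and set, for $n$ large,
$$u_0^n=2^{-ns}\,\na^\perp\!\big(2^{-n}\phi(x)\sin(2^nx_2)\big),$$
understood as the planar curl in the $(x_1,x_2)$-variables embedded in $\R^d$, so that $\Div u_0^n=0$ exactly and, at leading order, $u_0^n=2^{-ns}\phi(x)\cos(2^nx_2)\,e_1+\text{(lower order)}$. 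This block is spectrally supported in the annulus $|\xi|\sim2^n$, whence $\|u_0^n\|_{B^s_{p,r}}\sim\|\phi\|_{L^p}\le R$ and $\|u_0^n\|_{B^\ga_{p,r}}\sim2^{(\ga-s)n}$ for $\ga>s$, so $\psi+u_0^n\in\mathcal N_R(\psi)$ and $u_0^n\in U_R$. I then take $\ep_n=2^{-2n}$; since $\ep_n\to0$ this furnishes the required $\ep_n\le\ep$ for every $\ep>0$, while $\ep_n2^{2n}=1$ stays of order one.

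The heart of the argument is the equation for the difference $w=\mathbf S_t^{\ep_n}(\psi+u_0^n)-\mathbf S_t^{0}(\psi+u_0^n)$. Writing $u^\ep=\mathbf S_t^{\ep_n}(\psi+u_0^n)$, $u^0=\mathbf S_t^{0}(\psi+u_0^n)$ and subtracting the two systems gives, in transport form,
$$\pa_t w+u^\ep\cd\na w-\ep_n\De w+\na(P^\ep-P^0)=\ep_n\De u^0-w\cd\na u^0,\qquad w|_{t=0}=0.$$
Applying the Littlewood--Paley block $\Delta_n$ and the Leray projector, the viscous forcing $\ep_n\De u^0$, being supported at frequency $2^n$, contributes through Duhamel the principal term $\ep_n2^{2n}\int_0^te^{-\ep_n2^{2n}(t-\tau)}\dd\tau\,\Delta_n u^0\approx(1-e^{-t})\,\Delta_n u^0$. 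Since Theorem \ref{th1} controls $u^0$ uniformly in $B^\ga_{p,r}$ and the flow is close to the identity on a short interval, $\Delta_n u^0(t)$ retains an $L^p$-size $\gtrsim2^{-ns}$ on $[0,T_0]$; multiplying by $2^{ns}$ yields a contribution of order $(1-e^{-T_0})$ in $B^s_{p,r}$, which I will take as $\eta_0$.

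It remains to show that the other terms cannot cancel this gap after projection onto frequency $2^n$. Here the single-frequency structure is decisive: products of two objects localized at $2^n$, such as the self-interaction $w\cd\na u^0$, are supported at frequencies $\lesssim1$ and $\sim2^{n+1}$, so their $\Delta_n$-component is negligible; this is why $\cos(2^nx_2)$ rather than a wave packet is used. The genuinely surviving contribution at frequency $2^n$ is the transport $u^\ep\cd\na w$ by the base velocity, which I will not estimate by a product rule (that loses a factor $2^n$) but by the standard transport and commutator estimates in $B^s_{p,r}$ of Bahouri--Chemin--Danchin, yielding only a bounded (exponential-in-$T_0$) distortion of $\|\Delta_n w\|_{L^p}$; choosing $T_0$ small keeps this distortion below a fixed fraction of the gap. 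Finally, for the fixed datum $\psi$ the frequencies of $\psi$ beyond $2^n$ are $o(1)$ as $n\to\infty$, so $\psi$ acts on the $2^n$-block essentially as a smooth transporting field and its interaction is absorbed into the same commutator estimates.

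The main obstacle I anticipate is the lower bound itself: upper bounds on $w$ follow from energy and product estimates, but to conclude $\liminf_n\|w\|_{L^\infty_{T_0}B^s_{p,r}}\ge\eta_0$ I must rule out destructive interference between the viscous principal term and the transport and nonlinear corrections. This is delicate precisely at the critical index $s=\tfrac dp+1$, where several of the correction terms are borderline of order one rather than small; the resolution relies on the frequency separation furnished by the single-frequency construction (which removes the self-interactions from the $2^n$ shell) together with the uniform higher-regularity bounds of Theorem \ref{th1} to make all commutator remainders quantitatively small, leaving the $(1-e^{-T_0})$ viscous defect as the dominant, non-cancelling term.
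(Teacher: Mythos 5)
Your construction of the perturbation (a divergence-free, single-frequency bubble at frequency $2^n$ with $\ep_n=2^{-2n}$) and your identification of the viscous defect $e^{t\ep_n\De}u_0^n-u_0^n\approx t$ as the non-cancelling principal term are exactly the paper's mechanism for the case $\psi=0$ (its Propositions 3.1--3.3). The gap is in how you handle the fixed background $\psi\in B^s_{p,r}$, which is the entire difficulty of this theorem. Your difference equation for $w=\mathbf{S}^{\ep_n}_t(\psi+u_0^n)-\mathbf{S}^0_t(\psi+u_0^n)$ is forced by $\ep_n\De u^0$ with $u^0=\mathbf{S}^0_t(\psi+u_0^n)$, and you assert this forcing is ``supported at frequency $2^n$.'' It is not: $\psi$ is only in $B^s_{p,r}$, so $\De u^0$ carries contributions $\ep_n 2^{2j}\,2^{js}\|\Delta_j u^0\|_{L^p}\sim 2^{2(j-n)}c_j$ from all frequencies $j$ of $\psi$, which for $j\gg n$ are not small; indeed $\|u^0\|_{B^{s+2}}=\infty$ in general, so even the upper bound on your principal forcing fails. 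The same lack of extra regularity poisons the correction terms: $\|w\cdot\na u^0\|_{B^s_{p,r}}$ requires $\|u^0\|_{B^{s+1}_{p,r}}$, which is again infinite for generic $\psi$. Your remark that the high frequencies of $\psi$ are $o(1)$ does not repair this, because those small blocks are multiplied by $\ep_n 2^{2j}$ or by $2^{j}$ before being summed.

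The paper needs two ideas that are absent from your proposal. First, it replaces $\psi$ by $S_n\psi$ (so that $\|S_n\psi\|_{B^{s+k}_{p,r}}\lesssim 2^{kn}$ is available) and controls the truncation error $\mathbf{S}^{\ep_n}_t(\psi+u_0^n)-\mathbf{S}^{\ep_n}_t(S_n\psi+u_0^n)$ by a separate stability estimate in $B^s_{p,r}$ that costs only $\|(\mathrm{Id}-S_n)\psi\|_{B^s_{p,r}}\to0$ (Proposition 4.2). Second --- and this is the step your ``absorbed into the commutator estimates'' cannot replace --- it must kill the cross-interaction between the $\psi$-flow and the bubble, e.g.\ the source $\mathbf{S}_t(S_n\psi)\cdot\na\mathbf{S}_t(u_0^n)$, whose $B^s_{p,r}$ size is a priori of order $2^{n}\cdot 2^{-n}\cdot\hbox{(nothing small)}$, i.e.\ order one in $t$, exactly competitive with the viscous defect you want to survive. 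The paper achieves this by translating the bubble to $x-k\mathbf e$ with $|k|=|k(n)|$ large and exploiting the spatial decay of the two solutions to make the interaction term $o(1)$ (Proposition 4.1 and Step 2), after which the triangle inequality reduces the claim to Theorem 1.2 for $u_0^{n,k}$ alone plus Theorem 1.1 for $\psi$ alone. Without a frequency truncation of $\psi$ and without some decoupling device of this kind, the correction terms in your Duhamel formula are not quantitatively small relative to $(1-e^{-T_0})$, and the claimed lower bound does not follow.
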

\begin{remark}\label{re1}
To the best of our knowledge, Theorem \ref{th2} is the first result even in the Sobolev topology on the non-uniform convergence of the solutions for the Navier--Stokes Equations in the inviscid limit.
\end{remark}
\begin{remark}\label{re2}
Theorem \ref{th3} implies that, for any $\psi\in B^s_{p,r}$, the solutions of the Navier–Stokes equations $\f\{\mathbf{S}_{t}^{\ep}(u_0)\g\}_{\ep>0}$ can not converge uniformly to the Eulerian one as $\ep\to0$.
\end{remark}

Here let us give an overview of the strategy.

\noindent\textbf{Strategies to the proof of Theorem \ref{th2}.}
Due to the incompressible condition, the pressure can be eliminated from (NS) and (Euler). In fact, applying the Leray operator $\mathcal{P}$ to (NS) and (Euler), respectively, then we have
\begin{align*}
\begin{cases}
\pa_t u-\ep \Delta u+\mathcal{P}(u\cdot \nabla u)=0,\\
u(0,x)=u_0(x),
\end{cases}
\quad\text{and}\qquad
\begin{cases}
\pa_t u+\mathcal{P}(u\cdot \nabla u)=0, \\
u(0,x)=u_0(x).
\end{cases}
\end{align*}
(1) We decompose the solution $\mathbf{S}^{\ep}_t(u_0)$ to (NS) into three parts
\begin{align}\label{U}
\mathbf{S}^{\ep}_t(u_0)=u_1+u_2+ \text{\bf Error-1} ,
\end{align}
where $u_1$ and $u_2$ solves the following system, respectively,
\begin{align*}
\begin{cases}
\pa_t u_1-\ep\Delta u_1=0, \\
u_1(0,x)=u_0(x),
\end{cases}
\quad\text{and}\qquad
\begin{cases}
\pa_t u_2-\ep\Delta u_2+\mathcal{P}\f(u_1\cdot \nabla u_1\g)=0, \\
u_2(0,x)=0.
\end{cases}
\end{align*}
(2) We also decompose the solution $\mathbf{S}^{0}_t(u_0)$ to (Euler) into three parts
\begin{align}\label{V}
\mathbf{S}^{0}_t(u_0)=v_1+v_2+ \text{\bf Error-2},
\end{align}
where
\begin{align*}
v_1=u_0\quad\text{
and}\quad v_2=\int_0^t\mathcal{P}\f(v_1\cdot \nabla v_1\g)\dd\tau=t\mathcal{P}\f(u_0\cdot \nabla u_0\g).
\end{align*}
To ensure that the difference between the solutions $\mathbf{S}^{\ep}_t(u_0)$ to (NS) and $\mathbf{S}^0_t(u_0)$ to (Euler) in the $B^s_{p,r}$-topology is bounded below by a positive constant at any later time, we need to construct the initial data sequence $u_0\in U_R$ to satisfy that the following three conditions: $\forall t\in(0,1]$ and $i=1,2$
\begin{itemize}
  \item $\f\|u_1-v_1\g\|_{B^s_{p,r}}\approx t, \qquad \text{see Proposition}\; \ref{pr1}$;
  \item $\f\|u_2-v_2\g\|_{B^s_{p,r}}\lesssim t^2, \qquad \text{see  Proposition}\; \ref{pr2}$;
  \item $\f\|\text{\bf Error-i}\g\|_{B^s_{p,r}}\lesssim t^2, \qquad \text{see  Proposition}\; \ref{pr3}$;
\end{itemize}
\noindent\textbf{Strategies to the proof of Theorem \ref{th3}.} We decompose the difference of $\mathbf{S}^{\ep_n}_{t}(\psi+u^{n}_{0})$ and $\mathbf{S}^{0}_{t}(\psi+u^{n}_{0})$ as follows
\bal\label{mo}
\mathbf{S}^{\ep_n}_{t}(\psi+u^{n}_{0})-\mathbf{S}^{0}_{t}(\psi+u^{n}_{0})
&=\underbrace{\mathbf{S}^{\ep_n}_{t}(u^{n}_{0})-\mathbf{S}^{0}_{t}(u^{n}_{0})}_{\text{Theorem\,} \ref{th2}}+\underbrace{\mathbf{S}^{\ep_n}_{t}(\psi)-\mathbf{S}^{0}_{t}(\psi)}_{\text{Theorem\,} \ref{th1}}\nonumber\\
&\quad+\underbrace{\mathbf{S}^{\ep_n}_{t}(S_n\psi)-\mathbf{S}^{\ep_n}_{t}(\psi)
+\mathbf{S}^{\ep_n}_{t}(\psi+u^{n}_{0})-\mathbf{S}^{\ep_n}_{t}(S_n\psi+u^{n}_{0})}_{\text{Proposition}\,\, \ref{pro5-1}}
\nonumber\\
&\quad+\underbrace{\mathbf{S}^{\ep_n}_{t}(S_n\psi+u^{n}_{0})-\mathbf{S}^{\ep_n}_{t}(S_n\psi)
-\mathbf{S}^{\ep_n}_{t}(u^{n}_{0})}_{\text{Proposition}\,\, \ref{pro5-2}}+\cdots.
\end{align}
We would like to mention that \eqref{mo} is the most difficulty term. To bypass this, we need to take advantage of the decay of solutions at infinity since the solutions is smooth. It promotes us to modify the initial data which also possess the decay at infinity, see Section \ref{sec4} for more details.

\noindent\textbf{Organization of our paper.} In Section \ref{sec2}, we list some notations and known results which will be used in the sequel. We prove Theorem \ref{th2} and Theorem \ref{th3} in Section \ref{sec3} and Section \ref{sec4}, respectively.
\section{Preliminaries}\label{sec2}
We will use the following notations throughout this paper.
\begin{itemize}
  \item For $X$ a Banach space and $I\subset\R$, we denote by $\mathcal{C}(I;X)$ the set of continuous functions on $I$ with values in $X$.
  \item The symbol $\mathrm{A}\lesssim (\gtrsim)\mathrm{B}$ means that there is a uniform positive ``harmless" constant $\mathrm{C}$ independent of $\mathrm{A}$ and $\mathrm{B}$ such that $A\leq(\geq) \mathrm{C}B$, and we sometimes use the notation $\mathrm{A}\approx \mathrm{B}$ means that $\mathrm{A}\lesssim \mathrm{B}$ and $\mathrm{B}\lesssim \mathrm{A}$.
  \item Let us recall that for all $u\in \mathcal{S}'$, the Fourier transform $\mathcal{F}u$, also denoted by $\widehat{u}$, is defined by
$$
\mathcal{F}u(\xi)=\widehat{u}(\xi)=\int_{\R^d}e^{-\mathrm{i}x\cd \xi}u(x)\dd x \quad\text{for any}\; \xi\in\R^d.
$$
  \item The inverse Fourier transform allows us to recover $u$ from $\widehat{u}$:
$$
u(x)=\mathcal{F}^{-1}\widehat{u}(x)=(2\pi)^{-d}\int_{\R^d}e^{\mathrm{i}x\cdot\xi}\widehat{u}(\xi)\dd\xi.
$$
 \item We denote the projection
\bbal
&\mathcal{P}: L^{p}(\mathbb{R}^{d}) \rightarrow L_{\sigma}^{p}(\mathbb{R}^{d}) \equiv \overline{\left\{f \in \mathcal{C}^\infty_{0}(\mathbb{R}^{d}) ; {\rm{div}} f=0\right\}}^{\|\cdot\|_{L^{p}(\mathbb{R}^{d})}},\quad p\in(1,\infty),\\
&\mathcal{Q}=\mathrm{Id}-\mathcal{P}.
\end{align*}
In $\mathbb{R}^{d}$, $\mathcal{P}$ can be defined by $\mathcal{P}= \mathrm{Id}+(-\Delta)^{-1}\nabla {\rm{div}}$, or equivalently, $\mathcal{P}=(\mathcal{P}_{i j})_{1 \leqslant i, j \leqslant d}$, where $\mathcal{P}_{i j} \equiv \delta_{i j}+R_{i} R_{j}$ with $\delta_{i j}$ being the Kronecker delta ($\delta_{i j}=0$ for $i\neq j$ and $\delta_{i i}=0$) and $R_{i}$ being the Riesz transform with symbol $-\mathrm{i}\xi_1/|\xi|$. Obviously, $\mathcal{Q}= -(-\Delta)^{-1}\nabla {\rm{div}}$, and if $\div\, u=\div\, v=0$, it holds that
$
\mathcal{Q}(u\cdot\na v)= \mathcal{Q}(v\cdot\na u).
$
\end{itemize}
Next, we will recall some facts about the Littlewood-Paley decomposition and the nonhomogeneous Besov spaces (see \cite{B.C.D} for more details).
Choose a radial, non-negative, smooth function $\vartheta:\R^d\mapsto [0,1]$ such that
\begin{itemize}
  \item ${\rm{supp}} \;\vartheta\subset B(0, 4/3)$;
  \item $\vartheta(\xi)\equiv1$ for $|\xi|\leq3/4$.
\end{itemize}
Setting $\varphi(\xi):=\vartheta(\xi/2)-\vartheta(\xi)$, then we deduce that $\varphi$ has the following properties
\begin{itemize}
  \item ${\rm{supp}} \;\varphi\subset \left\{\xi\in \R^d: 3/4\leq|\xi|\leq8/3\right\}$;
  \item $\varphi(\xi)\equiv 1$ for $4/3\leq |\xi|\leq 3/2$;
  \item $\vartheta(\xi)+\sum_{j\geq0}\varphi(2^{-j}\xi)=1$ for any $\xi\in \R^d$;
  \item $\sum_{j\in \mathbb{Z}}\varphi(2^{-j}\xi)=1$ for any $\xi\in \R^d\setminus\{0\}$.
\end{itemize}
The nonhomogeneous and homogeneous dyadic blocks are defined as follows
\begin{equation*}
\begin{cases}
\forall\, u\in \mathcal{S'}(\R^d),\quad \Delta_ju=0,\; \text{if}\; j\leq-2;\quad
\Delta_{-1}u=\vartheta(D)u;\quad
\Delta_ju=\varphi(2^{-j}D)u,\; \; \text{if}\;j\geq0,\\
\forall\, u\in \mathcal{S'}\setminus \mathrm{P}(\R^d),\quad
\Delta_ju=\varphi(2^{-j}D)u,\; \; \text{if}\;j\in \mathbb{Z},
\end{cases}
\end{equation*}
where $\mathcal{S'}\setminus \mathrm{P}$ denote the tempered distribution modulo the polynomials and the pseudo-differential operator is defined by $\sigma(D):u\to\mathcal{F}^{-1}(\sigma \mathcal{F}u)$.

The Bony's decomposition reads as
\begin{align*}
&uv={T}_{u}v+{T}_{v}u+{R}(u,v)\quad\text{with}\quad
{T}_{u}v=\sum_{j\in\mathbb{Z}}{S}_{j-1}u{\Delta}_jv \quad\mbox{and} \quad{R}(u,v)=\sum_{j\in\mathbb{Z}}{\Delta}_ju\tilde{\Delta}_jv.
\end{align*}
We recall the definition of the Besov Spaces and norms.
\begin{definition}[Besov Space, See \cite{B.C.D}]
Let $s\in\mathbb{R}$ and $(p,r)\in[1, \infty]^2$. The nonhomogeneous and homogeneous Besov spaces are defined, respectively,
$$
B^{s}_{p,r}:=\f\{f\in \mathcal{S}':\;\|f\|_{B^{s}_{p,r}}:=\left\|2^{js}\|\Delta_jf\|_{L_x^p}\right\|_{\ell^r(j\geq-1)}<\infty\g\}
$$
and
$$
\dot{B}^{s}_{p,r}:=\f\{f\in \mathcal{S}'\setminus \mathrm{P}:\;\|f\|_{\dot{B}^{s}_{p,r}(\R^d)}:=\left\|2^{js}\|\dot{\Delta}_jf\|_{L_x^p}\right\|_{\ell^r(j\in \mathbb{Z})}<\infty\g\}.
$$
\end{definition}
For any $s>0$ and $(p,r)\in[1, \infty]^2$, we remark that
$$\|f\|_{B^{s}_{p,r}}\approx \|f\|_{L^{p}}+\|f\|_{\dot{B}^{s}_{p,r}}.$$
The following Bernstein's inequalities will be used in the sequel.
\begin{lemma}[See \cite{B.C.D}] \label{lem2.1} Let $\mathcal{B}$ be a ball and $\mathcal{C}$ be an annulus. There exists a constant $C>0$ such that for all $k\in \mathbb{N}\cup \{0\}$, any $\lambda\in \R^+$ and any function $f\in L^p$ with $1\leq p \leq q \leq \infty$, we have
\begin{align*}
&{\rm{supp}}\widehat{f}\subset \lambda \mathcal{B}\;\Rightarrow\; \|D^kf\|_{L^q}\leq C^{k+1}\lambda^{k+(\frac{1}{p}-\frac{1}{q})}\|f\|_{L^p},  \\
&{\rm{supp}}\widehat{f}\subset \lambda \mathcal{C}\;\Rightarrow\; C^{-k-1}\lambda^k\|f\|_{L^p} \leq \|D^kf\|_{L^p} \leq C^{k+1}\lambda^k\|f\|_{L^p}.
\end{align*}
\end{lemma}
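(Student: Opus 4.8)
The plan is to reduce both estimates to a single device: write $f$ (or a derivative $\pa^\alpha f$) as the convolution of $f$ with a fixed profile rescaled to frequency $\lambda$, and then apply Young's convolution inequality, reading off the powers of $\lambda$ from the scaling of the profile. Throughout, $\|D^kf\|_{L^p}$ is understood as $\sum_{|\alpha|=k}\|\pa^\alpha f\|_{L^p}$, equivalent norms differing only by $k$-dependent combinatorial factors that I will keep geometric.

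For the first (ball) inequality, fix $\phi\in\mathcal{C}^\infty_0(\R^d)$ with $\phi\equiv1$ on $\mathcal{B}$ and set $g=\mathcal{F}^{-1}\phi$. Since $\mathrm{supp}\,\widehat f\subset\lambda\mathcal{B}$ we have $\widehat f=\phi(\cdot/\lambda)\widehat f$, hence for $|\alpha|=k$, $\pa^\alpha f=\lambda^k\,g_{\alpha,\lambda}*f$ with $g_{\alpha,\lambda}(x)=\lambda^d(\pa^\alpha g)(\lambda x)$. Choosing $r\in[1,\infty]$ by $1+\frac1q=\frac1r+\frac1p$ (which forces $r\ge1$ precisely because $p\le q$), Young's inequality gives $\|\pa^\alpha f\|_{L^q}\le \lambda^k\|g_{\alpha,\lambda}\|_{L^r}\|f\|_{L^p}$, and the scaling identity $\|g_{\alpha,\lambda}\|_{L^r}=\lambda^{d(1-1/r)}\|\pa^\alpha g\|_{L^r}=\lambda^{d(\frac1p-\frac1q)}\|\pa^\alpha g\|_{L^r}$ produces the claimed power of $\lambda$, namely $k+d(\tfrac1p-\tfrac1q)$. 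To secure the geometric constant $C^{k+1}$ rather than one growing like $\alpha!$, I would not differentiate the fixed profile $k$ times; instead I separate the two effects. The gain of integrability is applied once, via the $k=0$ case $\|h\|_{L^q}\le C\lambda^{d(\frac1p-\frac1q)}\|h\|_{L^p}$ for any $h$ spectrally supported in $\lambda\mathcal{B}$. The cost of the derivatives is then obtained by iterating the single-derivative bound $\|\pa_i h\|_{L^p}\le C\lambda\|h\|_{L^p}$ (with $C$ independent of $i$, $\lambda$), which is legitimate because each differentiation preserves the spectral localization in $\lambda\mathcal{B}$; $k$ iterations yield $\|\pa^\alpha f\|_{L^p}\le (C\lambda)^k\|f\|_{L^p}$. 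Composing the two estimates gives $\|\pa^\alpha f\|_{L^q}\le C^{k+1}\lambda^{k+d(\frac1p-\frac1q)}\|f\|_{L^p}$.

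For the second (annulus) inequality, the upper bound is the case $p=q$, $r=1$ of the construction above, with $\phi$ replaced by a cutoff $\tilde\varphi\equiv1$ on $\mathcal{C}$ supported in a slightly larger annulus bounded away from the origin. The lower bound is where the annulus hypothesis is essential, and I expect it to be the main obstacle: one must invert the derivatives, which is only possible because $\widehat f$ avoids $\xi=0$. Using the multinomial identity $\sum_{|\alpha|=k}\binom{k}{\alpha}\xi^{2\alpha}=|\xi|^{2k}$, which is nonvanishing on $\lambda\mathcal{C}$, I set $m_\alpha(\xi)=\binom{k}{\alpha}\overline{(i\xi)^\alpha}|\xi|^{-2k}\tilde\varphi(\xi/\lambda)$, so that $\sum_{|\alpha|=k}m_\alpha(\xi)(i\xi)^\alpha=\tilde\varphi(\xi/\lambda)\equiv1$ on $\mathrm{supp}\,\widehat f$. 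This yields the representation $f=\sum_{|\alpha|=k}\mathcal{F}^{-1}(m_\alpha)*\pa^\alpha f$, and since $m_\alpha$ is homogeneous of degree $-k$ times a fixed annular cutoff, $m_\alpha(\xi)=\lambda^{-k}M_\alpha(\xi/\lambda)$ with $M_\alpha$ independent of $\lambda$. Young's inequality with $r=1$ together with the scaling $\|\mathcal{F}^{-1}(m_\alpha)\|_{L^1}=\lambda^{-k}\|\mathcal{F}^{-1}M_\alpha\|_{L^1}$ then give $\|f\|_{L^p}\le \lambda^{-k}\big(\sum_{|\alpha|=k}\|\mathcal{F}^{-1}M_\alpha\|_{L^1}\big)\|D^kf\|_{L^p}$.

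Finally, to keep this last constant of the form $C^{k+1}$ I would bound the sum of kernel norms: the binomial weights contribute $\sum_{|\alpha|=k}\binom{k}{\alpha}=d^k$, the number of multi-indices is polynomial in $k$, and each $\|\mathcal{F}^{-1}M_\alpha\|_{L^1}$ is controlled by finitely many sup-norms of $\xi$-derivatives of $|\xi|^{-2k}\xi^\alpha\tilde\varphi$, each of which grows at most polynomially in $k$ (the only $k$-dependence coming from differentiating $|\xi|^{-2k}$, which is harmless on the annulus $|\xi|\approx1$). Multiplying these contributions stays within $C^{k+1}$. The genuinely delicate points throughout are thus not the scaling bookkeeping but (i) the invertibility step for the lower bound, which crucially uses $\mathrm{supp}\,\widehat f\subset\lambda\mathcal{C}$ rather than a ball, and (ii) the organization that keeps every $k$-dependent factor geometric rather than factorial.
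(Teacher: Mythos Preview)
The paper does not prove this lemma; it is quoted from Bahouri--Chemin--Danchin \cite{B.C.D} (their Lemma~2.1), so there is no in-paper argument to compare against. Your proposal is correct and is essentially the standard proof found in that reference: represent $f$ as a convolution with a rescaled Schwartz kernel coming from a frequency cutoff equal to $1$ on $\mathcal{B}$ (resp.\ $\mathcal{C}$), apply Young's inequality for the upper bounds, and for the lower bound exploit the multinomial identity $\sum_{|\alpha|=k}\binom{k}{\alpha}\xi^{2\alpha}=|\xi|^{2k}$, nonvanishing on the annulus, to build Fourier multipliers that recover $f$ from its $k$-th order derivatives. Your device for keeping the constant geometric in $k$---iterating a single-derivative estimate $k$ times for the upper bound, and controlling $\|\mathcal{F}^{-1}M_\alpha\|_{L^1}$ by a fixed number of $\xi$-derivatives on the unit-scale annulus for the lower bound---is also the standard one.

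One discrepancy worth flagging: your scaling computation correctly produces the exponent $k+d\bigl(\tfrac1p-\tfrac1q\bigr)$ in the first inequality; the exponent $k+\bigl(\tfrac1p-\tfrac1q\bigr)$ printed in the statement is a typographical omission of the dimensional factor $d$.
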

Next we recall the following product law which will be used often in the sequel.
\begin{lemma}[See \cite{B.C.D}]\label{lp}
Assume $(s,p,r)$ satisfies \eqref{con1} and $\sigma>0$. Then
 there exists a constant $C$, depending only on $d,p,r,\sigma$ or $s$ such that
$$\|fg\|_{B^{\sigma}_{p,r}}\leq C\f(\|f\|_{L^\infty}\|g\|_{B^\sigma_{p,r}}+\|g\|_{L^\infty}\|f\|_{B^{\sigma}_{p,r}}\g),\quad \forall u,v\in L^\infty\cap B^{\sigma}_{p,r}.$$
Furthermore, due to the embedding $B^{s-1}_{p,r}(\R^d)\hookrightarrow L^{\infty}(\R^d)$, there holds
$$\|f\cdot \na g\|_{B^{s-1}_{p,r}}\leq C\|f\|_{B^{s-1}_{p,r}}\|g\|_{B^s_{p,r}},\quad \forall(f,g)\in B^{s-1}_{p,r}\times B^s_{p,r}$$
and
\begin{align}\label{cj}
\|f\cdot \na g\|_{B^{s}_{p,r}}\leq C\f(\|f\|_{B^{s-1}_{p,r}}\|g\|_{B^{s+1}_{p,r}}+\|f\|_{B^{s}_{p,r}}\|g\|_{B^{s}_{p,r}}\g),\quad \forall(f,g)\in B^{s}_{p,r}\times B^{s+1}_{p,r}.
\end{align}
\end{lemma}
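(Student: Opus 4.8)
All three inequalities rest on Bony's decomposition recalled above together with the two fundamental building blocks of paraproduct calculus, so the plan is first to establish these blocks, then to assemble the product law, and finally to read off the gradient estimates by distributing the regularity appropriately. For the paraproduct $T_uv=\sum_{j}S_{j-1}u\,\Delta_jv$, I would use that each summand $S_{j-1}u\,\Delta_jv$ has Fourier support in a dilated annulus, so $\Delta_k(T_uv)\ne0$ forces $|k-j|\le N_0$ for a fixed $N_0$; Hölder's inequality and $\|S_{j-1}u\|_{L^\infty}\le\|u\|_{L^\infty}$ then give $\|\Delta_k(T_uv)\|_{L^p}\les\|u\|_{L^\infty}\sum_{|k-j|\le N_0}\|\Delta_jv\|_{L^p}$, and after multiplying by $2^{k\sigma}$ and taking the $\ell^r_k$ norm a convolution with a finitely supported sequence yields $\|T_uv\|_{B^\sigma_{p,r}}\les\|u\|_{L^\infty}\|v\|_{B^\sigma_{p,r}}$ for every $\sigma\in\R$. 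For the remainder $R(u,v)=\sum_j\Delta_ju\,\tilde{\Delta}_jv$, the summand is supported in a \emph{ball} of size $2^j$, so $\Delta_kR(u,v)$ collects all frequencies $j\ge k-N_1$; using $\|\Delta_k(\Delta_ju\,\tilde{\Delta}_jv)\|_{L^p}\les\|u\|_{L^\infty}\|\tilde{\Delta}_jv\|_{L^p}$ and writing $2^{k\sigma}=2^{(k-j)\sigma}2^{j\sigma}$, the weights $2^{(k-j)\sigma}$ over $j\ge k-N_1$ are $\ell^1$-summable precisely because $\sigma>0$, whence $\|R(u,v)\|_{B^\sigma_{p,r}}\les\|u\|_{L^\infty}\|v\|_{B^\sigma_{p,r}}$.

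\smallskip

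With these in hand I would prove the product law by writing $fg=T_fg+T_gf+R(f,g)$, applying the paraproduct bound to the first two terms and the remainder bound (legitimate since $\sigma>0$) to the third, and absorbing $R(f,g)$ into the $\|f\|_{L^\infty}\|g\|_{B^\sigma_{p,r}}$ contribution. For the gradient estimates I would invoke the embedding $B^{s-1}_{p,r}\hookrightarrow L^\infty$ granted by \eqref{con1}, which gives $\|f\|_{L^\infty}\les\|f\|_{B^{s-1}_{p,r}}$ and $\|\na g\|_{L^\infty}\les\|\na g\|_{B^{s-1}_{p,r}}\les\|g\|_{B^s_{p,r}}$. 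The bound on $\|f\cd\na g\|_{B^{s-1}_{p,r}}$ is then the product law with $\sigma=s-1>0$, since $\|f\|_{L^\infty}\|\na g\|_{B^{s-1}_{p,r}}+\|\na g\|_{L^\infty}\|f\|_{B^{s-1}_{p,r}}\les\|f\|_{B^{s-1}_{p,r}}\|g\|_{B^s_{p,r}}$. For the sharper estimate \eqref{cj} I would again split $f\cd\na g=T_f\na g+T_{\na g}f+R(f,\na g)$: the pieces $T_f\na g$ and $R(f,\na g)$ are both controlled by $\|f\|_{L^\infty}\|\na g\|_{B^s_{p,r}}\les\|f\|_{B^{s-1}_{p,r}}\|g\|_{B^{s+1}_{p,r}}$, producing the first term, whereas $T_{\na g}f$ is controlled by $\|\na g\|_{L^\infty}\|f\|_{B^s_{p,r}}\les\|f\|_{B^s_{p,r}}\|g\|_{B^s_{p,r}}$, producing the second.

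\smallskip

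The main delicate point I anticipate is the remainder term: because its pieces live in balls rather than annuli, $\Delta_kR(u,v)$ is a one-sided sum over high frequencies that converges in $\ell^r$ only when the regularity index is strictly positive, and this is exactly where the hypothesis $\sigma>0$ (and $s>0$ for \eqref{cj}) is forced. A secondary bookkeeping issue is that \eqref{cj} only comes out with two terms if one places the extra derivative on $g$ whenever $g$ is the high-frequency factor (pairing $B^{s+1}_{p,r}$ with $B^{s-1}_{p,r}$) but estimates $\na g$ merely in $L^\infty$ when it acts as the slowly varying coefficient in $T_{\na g}f$ (pairing $B^s_{p,r}$ with $B^s_{p,r}$). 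The only borderline index, $s-1=0$, occurs under \eqref{con1} only when $p=\infty$ and $r=1$, and there the same decomposition closes using the $r=1$ summability of the remainder, reflecting the algebra property of $B^0_{\infty,1}$.
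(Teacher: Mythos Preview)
The paper does not supply its own proof of this lemma; it is simply quoted from \cite{B.C.D}. Your outline is exactly the standard paraproduct/remainder argument developed there (Theorems~2.82, 2.85 and Corollary~2.86), and it is correct for the first product inequality and for \eqref{cj}, since in both cases the target Besov index is strictly positive (recall $s\ge 1$ under \eqref{con1}, so the remainder bound with $\sigma=s>0$ applies without difficulty).

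One caveat concerns your last sentence. At the endpoint $(s,p,r)=(1,\infty,1)$ the Bony remainder does \emph{not} close at total regularity zero merely from ``$r=1$ summability'': the standard estimate gives only $R:B^{0}_{\infty,1}\times B^{0}_{\infty,1}\to B^{0}_{\infty,\infty}$, because after swapping the sums $\sum_{k}\sum_{j\ge k-N_1}$ one picks up an unbounded factor of order $j$ that cannot be absorbed into $\|f\|_{B^{0}_{\infty,1}}\|\na g\|_{B^{0}_{\infty,1}}$. So your claimed mechanism for the borderline case is not the right one. The endpoint estimate $\|f\cd\na g\|_{B^{0}_{\infty,1}}\les\|f\|_{B^{0}_{\infty,1}}\|g\|_{B^{1}_{\infty,1}}$ is nonetheless true, but it requires a separate argument rather than the plain $\sigma>0$ remainder bound; the paper itself sidesteps the issue by citing \cite{B.C.D} here and invoking \cite{Pak} for the analogous endpoint in the proof of Lemma~\ref{lem:P}.
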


\begin{lemma}\label{lem:P}
Assume $(s,p,r)$ satisfies \eqref{con1}. Then there exists a constant $C$, depending only on $d,p,r,s$, such that for all $u,v\in B^s_{p,r}$ with $\mathrm{div\,} u=\mathrm{div\,} v=0$,
\begin{align*}
&\|\mathcal{Q}(u\cdot \na v)\|_{B^s_{p,r}}\leq C \|u\|_{B^s_{p,r}}\|v\|_{B^s_{p,r}}
\end{align*}
and
\begin{align*}
&\|\mathcal{Q}(u\cdot \na v)\|_{B^{s-1}_{p,r}}\leq C \min\f\{\|u\|_{B^{s-1}_{p,r}}\|v\|_{B^s_{p,r}},\, \|v\|_{B^{s-1}_{p,r}}\|u\|_{B^s_{p,r}}\g\}.
\end{align*}
\end{lemma}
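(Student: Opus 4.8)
The plan is to exploit incompressibility twice: once to turn $\q(u\cd\na v)$ into a genuinely smoothing operator that gains one derivative, and once, through the identity $u\cd\na v=\Div(u\otimes v)$, to cure the low-frequency singularity of the Leray-type multiplier. Using the equivalence $\|f\|_{B^\sigma_{p,r}}\approx\|f\|_{L^p}+\|f\|_{\B^\sigma_{p,r}}$ for $\sigma>0$ together with the elementary bound $\|f\|_{L^p}\les\|\De_{-1}f\|_{L^p}+\|f\|_{\B^\sigma_{p,r}}$, it will suffice to control the low-frequency block $\De_{-1}\q(u\cd\na v)$ in $L^p$ and the homogeneous part $\q(u\cd\na v)$ in $\B^\sigma_{p,r}$ separately.

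First I record the two algebraic facts I will use. Since $\Div u=\Div v=0$, one has $u\cd\na v=\Div(u\otimes v)$ and, taking one more divergence, $\Div(u\cd\na v)=\sum_{j,k}\pa_ku_j\,\pa_jv_k=:\na u\!:\!\na v$. Because $\q=-(-\De)^{-1}\na\Div$, this yields the two representations
\[
\q(u\cd\na v)=-(-\De)^{-1}\na\big(\na u\!:\!\na v\big)=\q\,\Div(u\otimes v),
\]
the first of which exhibits $\q(u\cd\na v)$ as the image of the \emph{product of first derivatives} $\na u\!:\!\na v$ under $\mathcal A:=(-\De)^{-1}\na$, whose symbol $i\xi/|\xi|^2$ is homogeneous of degree $-1$. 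I also record the symmetry $\q(u\cd\na v)=\q(v\cd\na u)$, noted in the Preliminaries, which produces the minimum in the second estimate once one of its two terms is established.

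For the homogeneous (high-frequency) part I use that the dyadic blocks of $\B^\sigma_{p,r}$ are supported on annuli away from the origin, so the Fourier-multiplier theorem applies with no restriction on $(p,r)$: $\q$ (a matrix of degree-$0$ Riesz-type multipliers) maps $\B^\sigma_{p,r}\to\B^\sigma_{p,r}$ and $\mathcal A$ maps $\B^{\sigma}_{p,r}\to\B^{\sigma+1}_{p,r}$, for every $\sigma$. For the first bound I apply $\q(u\cd\na v)=-\mathcal A(\na u\!:\!\na v)$ to get $\|\q(u\cd\na v)\|_{\B^s_{p,r}}\les\|\na u\!:\!\na v\|_{\B^{s-1}_{p,r}}\le\|\na u\!:\!\na v\|_{B^{s-1}_{p,r}}$, and then the product law of Lemma \ref{lp} (with $\sigma=s-1>0$, using $\na u,\na v\in B^{s-1}_{p,r}\hookrightarrow L^\infty$) bounds this by $\|u\|_{B^s_{p,r}}\|v\|_{B^s_{p,r}}$. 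For the second bound I keep $\q$ as a degree-$0$ operator, $\|\q(u\cd\na v)\|_{\B^{s-1}_{p,r}}\les\|u\cd\na v\|_{B^{s-1}_{p,r}}$, and invoke $\|u\cd\na v\|_{B^{s-1}_{p,r}}\les\|u\|_{B^{s-1}_{p,r}}\|v\|_{B^{s}_{p,r}}$ from Lemma \ref{lp}; the symmetry then yields the other term, hence the minimum.

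The genuinely delicate point — and the step I expect to be the main obstacle — is the low-frequency block $\De_{-1}\q(u\cd\na v)$ when $p\in\{1,\infty\}$, since there the symbols $\xi_l\xi_a/|\xi|^2$ and $\xi/|\xi|^2$ are singular at $\xi=0$ and do not define bounded operators on $L^1$ or $L^\infty$. This is precisely where the representation $\q(u\cd\na v)=\q\,\Div(u\otimes v)$ is essential: the relevant low-frequency operator is $\De_{-1}\q\,\Div$, whose symbol $\vartheta(\xi)\,i\xi_l\xi_a\xi_b/|\xi|^2$ is compactly supported and behaves like a degree-$+1$ homogeneous function near the origin, so its convolution kernel is smooth and decays like $|x|^{-(d+1)}$ and therefore lies in $L^1(\R^d)$. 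Young's inequality then gives $\|\De_{-1}\q(u\cd\na v)\|_{L^p}\les\|u\otimes v\|_{L^p}\les\|u\|_{L^\infty}\|v\|_{L^p}$ for every $p\in[1,\infty]$, which is dominated by the product of the relevant Besov norms in both estimates. Assembling the low- and high-frequency contributions proves the Lemma; the borderline case $s=d/p+1$ (where $s-1$ may vanish and the homogeneous product law degenerates) is handled by running the product estimates directly through Bony's decomposition, the remainder being controlled because one factor always sits in $L^\infty$ via $B^{s-1}_{p,r}\hookrightarrow L^\infty$.
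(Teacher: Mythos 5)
Your argument has the same skeleton as the paper's: split $\|\mathcal{Q}(u\cdot\nabla v)\|_{B^\sigma_{p,r}}$ into a low-frequency $L^p$ piece and a homogeneous high-frequency piece, use the identity $\mathcal{Q}(u\cdot\nabla v)=-(-\Delta)^{-1}\nabla(\nabla u:\nabla v)$ so that the degree $-1$ operator $(-\Delta)^{-1}\nabla$ absorbs one derivative at high frequencies, and use the form $\mathcal{Q}\,\mathrm{div}(u\otimes v)$ to tame the low frequencies. The differences are in execution. For the low-frequency block the paper runs the chain $\dot B^0_{p,1}\hookrightarrow L^p$, $\|\mathcal{Q}\,\mathrm{div}(u\otimes v)\|_{\dot B^0_{p,1}}\lesssim\|u\otimes v\|_{\dot B^1_{p,1}}\lesssim\|u\otimes v\|_{B^s_{p,r}}$, whereas you show directly that $\Delta_{-1}\mathcal{Q}\,\mathrm{div}$ has an integrable kernel (compactly supported symbol of degree $+1$ at the origin) and apply Young's inequality; both are valid, and yours gives the $L^p$ bound uniformly in $p\in[1,\infty]$ in one stroke. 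For the high frequencies you invoke the product law of Lemma \ref{lp} on $\nabla u:\nabla v$ in $B^{s-1}_{p,r}$ where the paper redoes Bony's decomposition by hand; since $s-1>0$ in every case you treat this way, the comparison $\|\cdot\|_{\dot B^{s-1}_{p,r}}\lesssim\|\cdot\|_{B^{s-1}_{p,r}}$ that you rely on is legitimate, and the second inequality together with the symmetry $\mathcal{Q}(u\cdot\nabla v)=\mathcal{Q}(v\cdot\nabla u)$ is handled exactly as in the paper.

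The one place your sketch does not close is the borderline triple $(s,p,r)=(1,\infty,1)$, the only case allowed by \eqref{con1} in which $s-1=0$. There $\|f\|_{\dot B^{0}_{\infty,1}}\lesssim\|f\|_{B^{0}_{\infty,1}}$ fails, and ``running the product estimates through Bony's decomposition'' does not rescue you: the paraproducts are fine, but the remainder $R(\partial_j u^i,\partial_i v^j)$, with both factors only in $L^\infty\cap B^0_{\infty,1}$, has total regularity zero, so the standard remainder estimate lands it only in $\dot B^{0}_{\infty,\infty}$; after applying $(-\Delta)^{-1}\nabla$ you reach $\dot B^{1}_{\infty,\infty}$ rather than $B^{1}_{\infty,1}$ --- the third index is lost exactly where you need it. Recovering this endpoint requires a finer use of the divergence-free structure; the paper does not reprove it but quotes Proposition 8 of Pak--Park \cite{Pak} for precisely this triple. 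Either cite that result or confine your unified argument to the remaining cases, as the paper does.
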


\begin{proof} Since the case $(s,p,r)=(1,\infty,1)$ has been proved in \cite{Pak} (see Proposition 8), we need only to consider the remaining case.

For the first part, it is easy to get
\begin{align*}
\|\mathcal{Q}(u\cdot \na v)\|_{B^s_{p,r}}\les  \|(\mathrm{Id}-\De_{-1})\mathcal{Q}(u\cdot \na v)\|_{\dot{B}^s_{p,r}}+\|\mathcal{Q}(u\cdot \na v)\|_{L^p}
=: \mathrm{I}+\mathrm{II}.
\end{align*}
Then, by Bony's decomposition, we have
\begin{align*}
\mathrm{I}&\lesssim  \sum^d_{i,j=1}\f\|(\mathrm{Id}-\De_{-1})[T_{\pa_j u^i}(\pa_iv^j)+ T_{\pa_i v^j}(\pa_ju^i)]\g\|_{\dot{B}^{s-1}_{p,r}}
+\sum^d_{i,j=1}\f\|(\mathrm{Id}-\De_{-1})R(u^i,\pa_iv^j)\g\|_{\dot{B}^{s}_{p,r}} \pr
\les \sum^d_{i,j=1}\f\|T_{\pa_j u^i}(\pa_iv^j)+ T_{\pa_i v^j}(\pa_ju^i)\g\|_{B^{s-1}_{p,r}}+\sum^d_{i,j=1}\f\|R(u^i,\pa_iv^j)\g\|_{B^{s}_{p,r}}
\pr \les\|\na u\|_{L^\infty}\|v\|_{B^s_{p,r}}+\|\na v\|_{L^\infty}\|u\|_{B^s_{p,r}}\les \|u\|_{B^s_{p,r}}\|v\|_{B^s_{p,r}},
\end{align*}
and due to the fact $\dot{B}^0_{p,1}\hookrightarrow L^p$
\begin{align*}
\mathrm{II}&\les \|\mathcal{Q}\Div(u\otimes v)\|_{\dot{B}^0_{p,1}}
\les \|u\otimes v\|_{\dot{B}^1_{p,1}} \les \|u\otimes v\|_{B^1_{p,1}} \les \|u\otimes v\|_{B^s_{p,r}}\les \|u\|_{B^s_{p,r}}\|v\|_{B^s_{p,r}}.
\end{align*}
For the second part, we just prove $\|\mathcal{Q}(u\cdot \na v)\|_{B^{s-1}_{p,r}}\leq C \|u\|_{B^{s-1}_{p,r}}\|v\|_{B^s_{p,r}}$ since it holds that $\mathcal{Q}(u\cdot \na v)=\mathcal{Q}(v\cdot \na u)$. Similarly,
\begin{align*}
\|\mathcal{Q}(u\cdot \na v)\|_{B^{s-1}_{p,r}}\les  \|(\mathrm{Id}-\De_{-1})\mathcal{Q}(u\cdot \na v)\|_{\dot{B}^{s-1}_{p,r}}+\|\mathcal{Q}(u\cdot \na v)\|_{L^p}=:\mathrm{III}+\mathrm{IV}.
\end{align*}
Then we get
\begin{align*}
\mathrm{III} \les  \|(\mathrm{Id}-\De_{-1})(u\cdot \na v)\|_{B^{s-1}_{p,r}}
\les  \|u\cdot \na v\|_{B^{s-1}_{p,r}}
\les \|u\|_{B^{s-1}_{p,r}}\|v\|_{B^{s}_{p,r}}.
\end{align*}
Notice that the support of these frequencies of $T_{u^j}\pa_j v^i$ and $T_{\pa_j v^i}u^j$ is away from zero and $\mathcal{Q}$ is a $S^0$-multiplier on their support, then we get
\begin{align*}
\mathrm{IV}\les  \|\mathcal{Q}(u\cdot \na v)\|_{B^{0}_{p,1}} &
\les \sum^d_{i,j=1}\f\|\na(-\De)^{-1}\pa_i(T_{u^j}\pa_j v^i+T_{\pa_j v^i}u^j)\g\|_{B^0_{p,1}}+\sum^d_{i,j=1}\f\|\na(-\De)^{-1}\pa_i\pa_jR(u^i,v^j)\g\|_{\dot{B}^0_{p,1}}
\pr\les \sum^d_{i,j=1}\f\|T_{u^j}\pa_j v^i+T_{\pa_j v^i}u^j\g\|_{B^0_{p,1}}+\sum^d_{i,j=1}\f\|R(u^i,v^j)\g\|_{\dot{B}^1_{p,1}}\pr
\les \sum^d_{i,j=1}\f\|T_{u^j}\pa_j v^i+T_{\pa_j v^i}u^j\g\|_{B^{s-1}_{p,r}}+\sum^d_{i,j=1}\f\|R(u^i,v^j)\g\|_{B^1_{p,1}}\pr
\les \|u\|_{B^{s-1}_{p,r}}\|\na v\|_{L^\infty}+\|u\|_{L^\infty}\|v\|_{B^{s}_{p,r}}+\|u\|_{L^\infty}\|v\|_{B^1_{p,1}}
\les \|u\|_{B^{s-1}_{p,r}}\|v\|_{B^{s}_{p,r}}.
\end{align*}
This completes the proof of Lemma \ref{lem:P}.
\end{proof}
Finally, we recall the regularity estimate for the transport-diffusion equation which reads
\begin{align}\label{eq:TDep}
\begin{cases}
\pa_t f+v\cdot \nabla f-\ep \Delta f=g,\\
f(0,x)=f_0(x),
\end{cases}
\end{align}
where $v:{\mathbb R}\times {\mathbb R}^d \to {\mathbb R}^d$, $f_0:{\mathbb R}^d\to {\mathbb R}^N$, and $g:{\mathbb R}\times {\mathbb R}^d\to {\mathbb R}^N$ are given.

\begin{lemma}[See \cite{B.C.D}]\label{cs}
Let $1\leq p,r\leq \infty$. Assume that
\begin{align*}
\sigma> -d \min\f(\frac{1}{p}, \frac{1}{p'}\g) \quad \mathrm{or}\quad \sigma> -1-d \min\f(\frac{1}{p}, \frac{1}{p'}\g)\quad \mathrm{if} \quad \mathrm{div\,} v=0.
\end{align*}
There exists a constant $C$, depending only on $d,p,r,\sigma$, such that for any smooth solution $f$ of \eqref{eq:TDep} and $t\geq 0$ we have
\begin{align}\label{ES2}
\|f\|_{L^\infty_tB^{\sigma}_{p,r}}\leq Ce^{CV_{p}(v,t)}\f(\|f_0\|_{B^\sigma_{p,r}}+\int^t_0\|g(\tau)\|_{B^{s}_{p,r}}\dd \tau\g),
\end{align}
with
\begin{align*}
V_{p}(v,t)=
\begin{cases}
\int_0^t \|\nabla v(s)\|_{B^{\frac{d}{p}}_{p,\infty}\cap L^\infty}\dd s,\quad \mathrm{if} \quad \sigma<1+\frac{d}{p},\\
\int_0^t \|\nabla v(s)\|_{B^{\sigma-1}_{p,r}}\dd s, \quad \quad \mathrm{if} \quad \sigma>1+\frac{d}{p}\ \mathrm{or}\ \{\sigma=1+\frac{d}{p} \mbox{ and } r=1\}.
\end{cases}
\end{align*}
If $f=v$, then for all $\sigma>0$ ($\sigma>-1$, if $\mathrm{div\,} v=0$), the estimate \eqref{ES2} holds with
$$V_{p}(t)=\int_0^t \|\nabla v(s)\|_{L^\infty}\dd s.$$
\end{lemma}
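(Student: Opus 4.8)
The plan is to reduce \eqref{ES2} to a frequency-localized $L^p$ energy inequality and to exploit the fact that the dissipation $-\ep\De$ contributes with a favourable sign, so that the resulting constant is independent of $\ep\in[0,1]$ and the bound degenerates continuously to the pure transport estimate at $\ep=0$ (this uniformity is precisely what the inviscid-limit application requires). First I would apply the dyadic block $\De_j$ to \eqref{eq:TDep} and set $f_j:=\De_jf$, obtaining
\begin{align*}
\pa_tf_j+v\cd\na f_j-\ep\De f_j=\De_jg+R_j,\qquad R_j:=[v\cd\na,\De_j]f.
\end{align*}
Testing this localized equation against $|f_j|^{p-2}f_j$ and integrating over $\R^d$, the transport term contributes $\fr1p\int(\Div v)|f_j|^p\dd x$, which vanishes when $\Div v=0$ and is otherwise controlled by $\|\na v\|_{L^\infty}\|f_j\|_{L^p}^p$, while the dissipation yields the nonnegative term $\ep(p-1)\int|f_j|^{p-2}|\na f_j|^2\dd x\geq0$, which may simply be discarded. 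Discarding a nonnegative quantity is the crucial step: it keeps the constant free of $\ep$. After dividing by $\|f_j\|_{L^p}^{p-1}$ this yields, for every $\ep\geq0$,
\begin{align*}
\fr{\dd}{\dd t}\|f_j\|_{L^p}\les\|\na v\|_{L^\infty}\|f_j\|_{L^p}+\|\De_jg\|_{L^p}+\|R_j\|_{L^p},
\end{align*}
and Gronwall's inequality controls $\|f_j(t)\|_{L^p}$ by the data, the forcing and the accumulated commutator.

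Next I would multiply by $2^{j\sigma}$ and take the $\ell^r(j\geq-1)$ norm. The contributions of $\|\De_jf_0\|_{L^p}$ and $\|\De_jg\|_{L^p}$ reassemble directly into $\|f_0\|_{B^\sigma_{p,r}}$ and $\int_0^t\|g\|_{B^\sigma_{p,r}}\dd\tau$, so the whole difficulty is concentrated in the commutator sum $\f\|2^{j\sigma}\|R_j\|_{L^p}\g\|_{\ell^r(j\geq-1)}$. For this I would invoke the standard commutator estimate for $[v\cd\na,\De_j]$: expanding $v\cd\na f$ by Bony's decomposition and comparing $\De_j(T_v\na f+T_{\na f}v+R(v,\na f))$ with $T_v\na\De_jf$, the surviving paraproduct and remainder pieces are bounded to give
\begin{align*}
\f\|2^{j\sigma}\|R_j\|_{L^p}\g\|_{\ell^r(j\geq-1)}\les V'_p(v)\,\|f\|_{B^\sigma_{p,r}},
\end{align*}
with $V'_p(v)=\|\na v\|_{B^{d/p}_{p,\infty}\cap L^\infty}$ in the low-regularity regime $\sigma<1+\fr dp$ and $V'_p(v)=\|\na v\|_{B^{\sigma-1}_{p,r}}$ in the high-regularity regime $\sigma>1+\fr dp$ (and at $\sigma=1+\fr dp$ with $r=1$). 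It is precisely the convergence of these paraproduct series in $\ell^r$ that forces the lower bound $\sigma>-d\min(1/p,1/p')$, improved to $\sigma>-1-d\min(1/p,1/p')$ when $\Div v=0$, since then $v\cd\na f=\Div(fv)$ casts the transport term in divergence form and gains one order of regularity in the bookkeeping; this same dichotomy dictates the two forms of $V_p(v,t)$ recorded in \eqref{ES2}.

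Inserting this bound and running Gronwall at the level of the Besov norm — the time-integrated commutator yields $\int_0^tV'_p(v(\tau))\|f(\tau)\|_{B^\sigma_{p,r}}\dd\tau$, absorbed into the prefactor $e^{CV_p(v,t)}$ — gives \eqref{ES2}. For the final assertion with $f=v$, the commutator becomes a self-commutator $[v\cd\na,\De_j]v$, whose paraproduct decomposition always pairs a Lipschitz norm of one copy of $v$ with a Besov norm of the other; since both copies coincide one may systematically place $\|\na v\|_{L^\infty}$ on the first factor and $\|v\|_{B^\sigma_{p,r}}$ on the second, obtaining
\begin{align*}
\f\|2^{j\sigma}\|[v\cd\na,\De_j]v\|_{L^p}\g\|_{\ell^r(j\geq-1)}\les\|\na v\|_{L^\infty}\|v\|_{B^\sigma_{p,r}}.
\end{align*}
Thus only the Lipschitz norm enters, which extends the admissible range to all $\sigma>0$ (resp. $\sigma>-1$ when $\Div v=0$) with $V_p(t)=\int_0^t\|\na v\|_{L^\infty}\dd s$. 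The main obstacle throughout is the commutator estimate: its paraproduct analysis is what fixes the sharp lower bounds on $\sigma$ and the precise shape of $V_p$, whereas once the dissipation is recognized as a nonnegative term in the $L^p$ energy identity the $\ep$-dependence disappears and the remainder of the argument coincides with the inviscid transport case.
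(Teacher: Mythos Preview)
The paper does not give its own proof of this lemma: it is quoted verbatim from \cite{B.C.D} (Theorems 3.14 and 3.19 there) and stated without argument. Your sketch is precisely the standard proof from that reference --- localize with $\De_j$, run an $L^p$ energy estimate on each block, discard the dissipative contribution since it has the good sign, control the commutator $[v\cd\na,\De_j]f$ via Bony's decomposition to obtain the claimed $V_p$-weights and the lower bounds on $\sigma$, and close by Gronwall --- so there is nothing to contrast.
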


\section{Proof of Theorem \ref{th2}}\label{sec3}
In this section we prove Theorem \ref{th2}. Assume $(s,p,r)$ satisfies the conditions \eqref{con1}. For fixed $\ep>0$, by Theorem \ref{th1} we know that there exists $T_\ep=T(\|u_0\|_{B^s_{p,r}},\ep)>0$ such that (NS) has a unique solution $u$ in $\mathcal{C}([0,T_\ep];B^s_{p,r})$.
\subsection{Construction of Initial Data}\label{sec3.1}
We need to introduce smooth, radial cut-off functions to localize the frequency region. Let $\widehat{\phi}\in \mathcal{C}^\infty_0(\mathbb{R})$ be an even, real-valued and non-negative function on $\R$ and satisfy
\begin{numcases}{\widehat{\phi}(\xi)=}
1, &if\; $|\xi|\leq \frac{1}{4^d}$,\nonumber\\
0, &if\; $|\xi|\geq \frac{1}{2^d}$.\nonumber
\end{numcases}
\begin{lemma}\label{ley2}
For any $p\in[1,\infty]$, there exists two positive constants $c_1$ and $c_2$, we have
\begin{align*}
c_1\leq\|\phi\|_{L^p(\R)}\leq c_2.
\end{align*}
\end{lemma}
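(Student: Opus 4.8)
The statement to prove is Lemma \ref{ley2}, which asserts that the function $\phi$ (defined via its Fourier transform $\widehat{\phi}$) has its $L^p(\R)$ norm bounded above and below by positive constants uniformly in $p\in[1,\infty]$.

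\textbf{The approach.} The plan is to extract all the information I need directly from the definition of $\widehat\phi$ and the inverse Fourier transform formula. Since $\widehat\phi\in\mathcal{C}^\infty_0(\R)$ is even, real-valued, and compactly supported, its inverse transform $\phi=\mathcal{F}^{-1}\widehat\phi$ is a real-valued, even Schwartz function. Being Schwartz, $\phi$ and all its derivatives decay faster than any polynomial, so $\phi\in L^p(\R)$ for every $p\in[1,\infty]$; the only real content is that the bounds $c_1,c_2$ can be chosen \emph{independently} of $p$. For this I would use the standard interpolation-type fact that for a fixed Schwartz function, $\|\phi\|_{L^p}$ is a continuous (indeed log-convex) function of $1/p$ on $[0,1]$, together with control at the two endpoints $p=1$ and $p=\infty$.

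\textbf{Key steps, in order.} First I would record that $\phi$ is smooth and real-valued, and in particular $\phi\in L^1\cap L^\infty$, so both $\|\phi\|_{L^1}$ and $\|\phi\|_{L^\infty}$ are finite positive numbers. The positivity $\|\phi\|_{L^p}>0$ is clear because $\phi\not\equiv 0$: indeed $\phi(0)=(2\pi)^{-1}\int_\R\widehat\phi(\xi)\,\dd\xi>0$ since $\widehat\phi$ is non-negative and not identically zero. Second, for the \emph{upper} bound I would interpolate: by the elementary inequality $\|\phi\|_{L^p}^p=\int_\R|\phi|^p\,\dd x\le \|\phi\|_{L^\infty}^{p-1}\|\phi\|_{L^1}$ for $p\ge 1$, one gets $\|\phi\|_{L^p}\le \|\phi\|_{L^\infty}^{1-1/p}\|\phi\|_{L^1}^{1/p}\le \max\{\|\phi\|_{L^\infty},\|\phi\|_{L^1}\}=:c_2$, a bound independent of $p$. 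Third, for the \emph{lower} bound, I would restrict the integration to a fixed interval where $|\phi|$ is bounded below. Since $\phi$ is continuous with $\phi(0)>0$, there is a $\delta>0$ and a constant $a>0$ with $|\phi(x)|\ge a$ for $|x|\le\delta$; then $\|\phi\|_{L^p}\ge\left(\int_{-\delta}^{\delta}a^p\,\dd x\right)^{1/p}=a(2\delta)^{1/p}\ge a\min\{1,2\delta\}=:c_1>0$, again uniformly in $p$. For $p=\infty$ the bounds are immediate from $0<\|\phi\|_{L^\infty}<\infty$ combined with $c_1\le\|\phi\|_{L^\infty}\le c_2$, which follows by letting $p\to\infty$ in the above or by a direct argument.

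\textbf{Main obstacle.} There is no serious obstacle here; this is a soft lemma. The only point requiring a little care is ensuring the constants are genuinely \emph{uniform} in $p$, so I would avoid any estimate whose constant blows up as $p\to1$ or $p\to\infty$ and instead use the two monotone interpolation inequalities above, which have the virtue that the factors $(2\delta)^{1/p}$ and $\|\phi\|_{L^1}^{1/p}$ stay in a fixed compact subinterval of $(0,\infty)$ for all $p\in[1,\infty]$. I would also double-check the normalization constant $(2\pi)^{-1}$ from the inverse Fourier transform convention used in the paper so that the positivity of $\phi(0)$ is stated correctly, but this does not affect the existence of the uniform bounds $c_1$ and $c_2$.
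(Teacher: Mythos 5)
Your proof is correct and follows essentially the same route as the paper: the lower bound comes from $\phi(0)=(2\pi)^{-1}\int_{\R}\widehat{\phi}\,\dd\xi>0$ and a neighbourhood of the origin where $|\phi|\geq\phi(0)/2$, and the upper bound from the fact that $\phi$ is Schwartz. The only cosmetic difference is in the upper bound, where you interpolate $\|\phi\|_{L^p}\leq\|\phi\|_{L^\infty}^{1-1/p}\|\phi\|_{L^1}^{1/p}$ while the paper integrates the pointwise decay $|\phi(x)|\leq C(1+|x|)^{-M}$ directly; both give constants uniform in $p$.
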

\begin{proof} For $p=\infty$, we have $\|\phi\|_{L^\infty}=\phi(0)>0$. In fact,
by the Fourier inverse formula and the Fubini theorem, we see that
$$\phi(0)=\frac{1}{2\pi}\int_{\R}\widehat{\phi}(\xi)\dd \xi>0$$
and
$$\|\phi\|_{L^\infty}=\sup_{x\in\R}\frac{1}{2\pi}\left|\int_{\R}\widehat{\phi}(\xi)\cos(x\xi)\dd \xi\right|\leq \frac{1}{2\pi}\int_{\R}\widehat{\phi}(\xi)\dd \xi.$$
For $p\in[1,\infty)$, there exists some $\delta>0$ such that for any $x\in B(0;\delta)$, we have $|\phi(x)|\geq \phi(0)/2$. Thus
\bbal
\|\phi\|_{L^p(\R)}\geq c_1.
\end{align*}
Since $\phi$ is a Schwartz function, we have for $100d\leq M\in \mathbb{Z}^+$
\bbal
|\phi(x)|\leq C(1+|x|)^{-M},
\end{align*}
which means that
\bbal
\|\phi\|_{L^p(\R)}\leq c_2.
\end{align*}
This completes the proof of Lemma \ref{ley2}.
\end{proof}
\begin{lemma}\label{ley3} Let $(s,p,r)$ satisfies \eqref{con1}. Define the initial data $u^n_0$ by
\begin{align*}
&u^n_0(x):=2^{-n(s+1)}
\f(
-\pa_2f_n,\;
\pa_1f_n,\;
0,\;
\cdots,\;
0
\g),\quad \text{where}\\
&f_n(x):=\phi(x_1)\cos \left(\frac{17}{12}2^nx_1\right)\prod_{i=2}^d\phi(x_i).
\end{align*}
Then for any $\sigma\in\R$, we have
$$\|u^n_0\|_{B^\sigma_{p,r}}\thickapprox 2^{n(\sigma-s)}\|\phi\|^d_{L^{p}}.$$
\end{lemma}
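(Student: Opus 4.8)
The plan is to use that $u^n_0$ is, up to a harmless constant, a single high‑frequency wave packet concentrated in the dyadic annulus $|\xi|\sim 2^n$, which collapses the Besov norm to one Littlewood–Paley block.

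\textbf{Step 1 (frequency localization).} First I would compute the Fourier transform. Since $f_n(x)=\big[\phi(x_1)\cos(\tfrac{17}{12}2^nx_1)\big]\prod_{i=2}^d\phi(x_i)$, its transform factorizes and the modulation shifts $\widehat\phi(\xi_1)$ so as to be centred at $\pm\frac{17}{12}2^n$. Recalling $\mathrm{supp}\,\widehat\phi\subset\{|\cdot|\le 2^{-d}\}$, the spectrum of $f_n$ (hence of $u^n_0$) sits in a box where $\xi_1=\pm\frac{17}{12}2^n+O(2^{-d})$ and $|\xi_i|\le 2^{-d}$ for $i\ge2$, so that $2^{-n}|\xi|=\frac{17}{12}+O(2^{-n})$. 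Because $\frac{17}{12}\in(\frac43,\frac32)$, which is exactly the interval on which $\varphi(2^{-n}\cdot)\equiv1$, for $n$ large this spectrum lies strictly inside $\{\frac43 2^n\le|\xi|\le\frac32 2^n\}$ and avoids $\mathrm{supp}\,\varphi(2^{-j}\cdot)$ for every $j\ne n$. Thus $\De_n u^n_0=u^n_0$ and $\De_j u^n_0=0$ otherwise, giving for every $\sigma\in\R$ the identity $\|u^n_0\|_{B^\sigma_{p,r}}=2^{n\sigma}\|u^n_0\|_{L^p}$.

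\textbf{Step 2 (reduction to an $L^p$ computation).} It then remains to show $\|u^n_0\|_{L^p}\approx 2^{-ns}\|\phi\|_{L^p}^d$. Writing out the two nonzero components, the second one, $2^{-n(s+1)}\pa_1 f_n$, carries the leading term $-\frac{17}{12}2^{-ns}\phi(x_1)\sin(\frac{17}{12}2^nx_1)\prod_{i\ge2}\phi(x_i)$ of size $2^{-ns}$, whereas the first component $-2^{-n(s+1)}\pa_2 f_n$ and the remaining piece of $\pa_1 f_n$ (the one landing on $\phi'$ rather than on the oscillation) are only $O(2^{-n(s+1)})$, hence negligible after dividing by $2^{-ns}$. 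Using the product structure of the $L^p$ norm and Lemma \ref{ley2} (so that $\|\phi\|_{L^p}$, $\|\phi'\|_{L^p}$ are comparable to constants), this reduces the whole estimate, up to lower‑order terms, to comparing $\|\phi(x_1)\sin(\frac{17}{12}2^nx_1)\|_{L^p(\R)}$ with $\|\phi\|_{L^p(\R)}$.

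\textbf{Step 3 (oscillatory averaging).} The crux is the two‑sided bound $\|\phi\sin(\lambda\cdot)\|_{L^p(\R)}\approx\|\phi\|_{L^p(\R)}$ (and similarly with $\cos$) for $\lambda$ large, with constants independent of $\lambda$. The upper bound is immediate from $|\sin|\le1$. For the lower bound I would invoke the standard weak‑$\ast$ averaging fact that $|\sin(\lambda x)|^p\rightharpoonup \frac1{2\pi}\int_0^{2\pi}|\sin\theta|^p\,\dd\theta=:c_p^p>0$, tested against the fixed $L^1$ density $|\phi|^p$ (equivalently, a Riemann‑sum argument exploiting that $\phi$ is essentially constant over one period $2\pi/\lambda$), yielding $\int_\R|\phi|^p|\sin(\lambda x)|^p\,\dd x\to c_p^p\|\phi\|_{L^p}^p$; the case $p=\infty$ follows from continuity of $\phi$ together with equidistribution of $\lambda x\bmod 2\pi$. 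Taking $\lambda=\frac{17}{12}2^n$ and feeding this back through Steps 1–2 gives $\|u^n_0\|_{B^\sigma_{p,r}}\approx 2^{n\sigma}\cdot 2^{-ns}\|\phi\|_{L^p}^d=2^{n(\sigma-s)}\|\phi\|_{L^p}^d$ for all large $n$.

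\textbf{Main obstacle.} The genuine difficulty is Step 3 for $p\neq2$: the lower bound on $\|\phi\sin(\lambda\cdot)\|_{L^p}$ uniformly in $\lambda$ (for $p=2$ it is immediate from Plancherel, but for general $p$ the oscillatory‑averaging argument is the only nontrivial ingredient). A secondary point demanding care is the quantitative check in Step 1 that the $O(2^{-d})$ frequency spread of $\widehat\phi$, after dilation by $2^{-n}$, indeed keeps the entire spectrum inside $(\frac43,\frac32)\cdot2^n$, so that the single‑block reduction is exact for every sufficiently large $n$.
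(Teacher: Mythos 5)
Your proof is correct and follows essentially the same route as the paper: the Fourier support computation shows $\widehat{u^n_0}$ lies in the annulus $\{\frac43 2^n\leq|\xi|\leq\frac32 2^n\}$ so that $\Delta_j u^n_0=\delta_{jn}u^n_0$ and $\|u^n_0\|_{B^\sigma_{p,r}}=2^{n\sigma}\|u^n_0\|_{L^p}$, after which the norm is read off from the dominant term $\lambda\,\phi(x_1)\sin(\lambda x_1)\prod_{i\ge2}\phi(x_i)$ with $\lambda=\frac{17}{12}2^n$. Your Step 3 (the $\lambda$-uniform lower bound $\|\phi\sin(\lambda\cdot)\|_{L^p}\gtrsim\|\phi\|_{L^p}$, via equidistribution) correctly identifies and fills in the one point the paper's proof asserts without justification in its concluding equivalence.
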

\begin{proof}
Straightforward computations yield
\bbal
\widehat{f_n}(\xi)=
\left[\widehat{\phi}\left(\xi_1-\frac{17}{12}2^n\right)+\widehat{\phi}\left(\xi_1+\frac{17}{12}2^n\right)\right]\prod_{i=2}^d\widehat{\phi}(\xi_i),
\end{align*}
which implies
\bal\label{s}
\mathrm{supp} \ \widehat{u^n_0}&\subset \left\{\xi\in\R^d: \ \frac{17}{12}2^n-\fr12\leq |\xi|\leq \frac{17}{12}2^n+\fr12\right\}\nonumber\\
&\subset \left\{\xi\in\R^d: \ \frac{4}{3}2^n\leq |\xi|\leq \frac{3}{2}2^n\right\}.
\end{align}
Recalling that $\varphi\equiv 1$ for $\frac43\leq |\xi|\leq \frac32$, we have
\begin{numcases}{\Delta_j(u^n_0)=}
u^n_0, &if $j=n$,\nonumber\\
0, &if $j\neq n$.\nonumber
\end{numcases}
Thus, we deduce that
\bbal
\|u^n_0\|_{B^\sigma_{p,r}(\R^d)}&=2^{n\sigma}\|u^n_0\|_{L^{p}(\R^d)}
= 2^{n(\sigma-s-1)}\f(\|\pa_1f_n\|_{L^{p}(\R^d)}+\|\pa_2f_n\|_{L^{p}(\R^d)}\g)
\approx2^{n(\sigma-s)}\|\phi\|^d_{L^{p}(\R)},
\end{align*}
which completes the proof of Lemma \ref{ley3}.
\end{proof}
\subsection{Key Approximation}\label{sec3.2}
From now on, we set $\ep_n:=2^{-2n}$.
\begin{proposition}\label{pr1} Let $u^n_0$ be given by Lemma \ref{ley3}. Assume that $(s,p,r)$ satisfies \eqref{con1}.
Then we have for $k\in\{-1,0,1\}$ and $t\in (0,1)$
\begin{align*}
\left\|e^{t\ep_n\Delta}u^n_0-u^n_0\right\|_{B^{s+k}_{p,r}}\approx t2^{kn}.
\end{align*}
\end{proposition}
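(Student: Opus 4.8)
The plan is to reduce the whole statement to a single $L^p$-estimate and then to analyse the Fourier multiplier $e^{-t\ep_n|\xi|^2}-1$ directly on the (thin) frequency support of $\widehat{u^n_0}$. First I would observe, using \eqref{s}, that the difference $w_n:=e^{t\ep_n\De}u^n_0-u^n_0$ has Fourier transform $(e^{-t\ep_n|\xi|^2}-1)\widehat{u^n_0}(\xi)$, which is again supported in $\{\tfrac43 2^n\le|\xi|\le\tfrac32 2^n\}$; hence $\De_j w_n=0$ for $j\ne n$ and therefore $\|w_n\|_{B^{s+k}_{p,r}}=2^{(s+k)n}\|w_n\|_{L^p}$. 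Combining this with Lemma~\ref{ley3} at $\sigma=0$, namely $\|u^n_0\|_{L^p}=\|u^n_0\|_{B^0_{p,r}}\approx 2^{-sn}\|\phi\|^d_{L^p}$ (and $\|\phi\|_{L^p}\approx1$ by Lemma~\ref{ley2}), it suffices to prove the scale-free estimate
\[
\|w_n\|_{L^p}\approx t\,\|u^n_0\|_{L^p},\qquad t\in(0,1),
\]
since multiplying by $2^{(s+k)n}$ then gives $\|w_n\|_{B^{s+k}_{p,r}}\approx t2^{kn}$.

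For the upper bound I would use the Duhamel-type identity $e^{t\ep_n\De}-1=\int_0^1 t\ep_n\De\,e^{\theta t\ep_n\De}\,\dd\theta$. Applying it to $u^n_0$ and using Young's inequality (the heat kernel has unit $L^1$ mass, so $e^{\theta t\ep_n\De}$ is an $L^p$-contraction) together with Bernstein's Lemma~\ref{lem2.1},
\[
\|w_n\|_{L^p}\le t\ep_n\int_0^1\|e^{\theta t\ep_n\De}\De u^n_0\|_{L^p}\dd\theta\le t\ep_n\|\De u^n_0\|_{L^p}\les t\,\ep_n 2^{2n}\|u^n_0\|_{L^p}= t\,\|u^n_0\|_{L^p},
\]
where the last step uses $\ep_n=2^{-2n}$. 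This half is routine.

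The lower bound is the main point. On $\mathrm{supp}\,\widehat{u^n_0}$ one has $|\xi|=\tfrac{17}{12}2^n+O(1)$, hence $t\ep_n|\xi|^2=c_0t+O(t2^{-n})$ with $c_0=(\tfrac{17}{12})^2$, so I would split the multiplier around its value at the centre frequency,
\[
w_n=(e^{-c_0t}-1)\,u^n_0+E_n,\qquad \widehat{E_n}(\xi)=\big(e^{-t\ep_n|\xi|^2}-e^{-c_0t}\big)\widehat{u^n_0}(\xi).
\]
The first term is an exact scalar multiple of $u^n_0$, so its $L^p$ norm equals $(1-e^{-c_0t})\|u^n_0\|_{L^p}\approx t\|u^n_0\|_{L^p}$, because $t\mapsto(1-e^{-c_0t})/t$ is bounded above and below on $(0,1]$. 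For $E_n$ I would write $\widehat{E_n}=m_n\widehat{u^n_0}$ with $m_n:=\big(e^{-t\ep_n|\xi|^2}-e^{-c_0t}\big)\Phi_n$, where $\Phi_n(\xi)=\chi_0(\xi-\omega_n)+\chi_0(\xi+\omega_n)$, $\omega_n=(\tfrac{17}{12}2^n,0,\dots,0)$, and $\chi_0$ is a fixed bump (independent of $n$) translated so that $\Phi_n\equiv1$ on the two $O(1)$-sized boxes containing $\mathrm{supp}\,\widehat{u^n_0}$. Then $E_n=(\mathcal F^{-1}m_n)*u^n_0$ and Young's inequality gives $\|E_n\|_{L^p}\le\|\mathcal F^{-1}m_n\|_{L^1}\|u^n_0\|_{L^p}$. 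Writing $\xi=\pm\omega_n+\eta$ and de-modulating (which does not change the $L^1$ norm of the inverse transform), the symbol becomes $e^{-c_0t}\big(e^{-t\frac{17}{6}2^{-n}\eta_1-t2^{-2n}|\eta|^2}-1\big)\chi_0(\eta)$, which together with all its $\eta$-derivatives is $O(t2^{-n})$ on the fixed ball $\mathrm{supp}\,\chi_0$; the standard kernel bound $\|\mathcal F^{-1}g\|_{L^1}\les\sum_{|\alpha|\le d+1}\|\pa^\alpha g\|_{L^2}$ (Cauchy--Schwarz plus Plancherel) then yields $\|\mathcal F^{-1}m_n\|_{L^1}\les t2^{-n}$, whence $\|E_n\|_{L^p}\les t2^{-n}\|u^n_0\|_{L^p}$. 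Combining the two pieces, $\|w_n\|_{L^p}\ge(1-e^{-c_0t})\|u^n_0\|_{L^p}-\|E_n\|_{L^p}\ge(c-C2^{-n})\,t\,\|u^n_0\|_{L^p}\gtrsim t\|u^n_0\|_{L^p}$ for $n$ large, which is all that the applications (taking $n\to\infty$) require.

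The hard part is exactly the kernel estimate $\|\mathcal F^{-1}m_n\|_{L^1}\les t2^{-n}$, that is, showing that on the frequency support the heat multiplier is close, in $L^p$-operator norm, to the constant $e^{-c_0t}$. The essential subtlety — and the reason the cutoff $\Phi_n$ must be adapted to the thin annulus of width $O(1)$ rather than to the Littlewood--Paley annulus of width $O(2^n)$ — is that over a box of size $O(1)$ the phase $t\ep_n|\xi|^2$ varies only by $O(t2^{-n})$, which is precisely the gain that makes the centre-frequency constant dominate; on the full dyadic annulus the phase would vary by $O(t)$ and no such smallness would be available.
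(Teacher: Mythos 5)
Your proof is correct, but it follows a genuinely different route from the paper's. The paper's argument is much shorter: it Taylor-expands $e^{t\ep_n\De}-\mathrm{Id}=\sum_{j\geq1}\frac{(t\ep_n\De)^j}{j!}$ and applies the annulus form of Bernstein's inequality (Lemma \ref{lem2.1}) term by term, using that $\ep_n|\xi|^2\in[16/9,\,9/4]$ on $\mathrm{supp}\,\widehat{u^n_0}$ by \eqref{s}, to squeeze $\|(e^{t\ep_n\De}-\mathrm{Id})u^n_0\|_{B^{s+k}_{p,r}}$ between $(e^{16t/9}-1)2^{kn}$ and $(e^{9t/4}-1)2^{kn}$. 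You instead reduce everything to one $L^p$ estimate (legitimate, since $w_n$ keeps the single-block spectral support), prove the upper bound by the contraction property of the heat semigroup plus Bernstein, and prove the lower bound by freezing the multiplier at the centre frequency $\pm\frac{17}{12}2^n$ and controlling the remainder $E_n$ through an $L^1$ kernel bound on the two $O(1)$-sized boxes that actually carry $\widehat{u^n_0}$. This costs more effort but buys rigour exactly where the paper's write-up is weakest: for $p\neq2$ one cannot lower-bound the $L^p$ norm of a non-constant Fourier multiplier applied to $u^n_0$ by the pointwise infimum of its symbol, and pushing the termwise Bernstein lower bound through a Taylor series whose terms alternate in sign (the symbol is $e^{-t\ep_n|\xi|^2}-1<0$) is not a valid step; your centre-frequency splitting, which exploits that the symbol varies only by $O(t2^{-n})$ over the thin support (rather than by $O(t)$ over the full dyadic annulus), supplies precisely the missing justification. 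Two minor caveats: your lower bound is obtained only for $n\geq n_0$, whereas the proposition is stated for all $n$ --- this is harmless both because only $n\to\infty$ is used in the proof of Theorem \ref{th2} and because the finitely many remaining $n$ can be handled by noting that $t\mapsto t^{-1}\|w_n\|_{L^p}$ is continuous and strictly positive on $(0,1]$ with a positive limit $\|\ep_n\De u^n_0\|_{L^p}/\|u^n_0\|_{L^p}$ as $t\to0^+$; and you should state explicitly that the implicit constants in $\|w_n\|_{L^p}\approx t\|u^n_0\|_{L^p}$ are uniform in $n$ and $t$, which your kernel estimate does deliver.
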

\begin{proof}  By Taylor's formula, one has
$$\left(e^{t\ep_n\Delta}-\mathrm{Id}\right)u^n_0=\sum_{j\geq1}\frac{(t\ep_n\Delta)^j}{j!}u^n_0.$$
Using Bernstein's inequality and noticing that \eqref{s}, we deduce that
\begin{align*}
\left\|\sum_{j\geq1}\frac{(16t/9)^j}{j!}u^n_0\right\|_{B^{s+k}_{p,r}}
\leq\left\|\sum_{j\geq1}\frac{(t\ep_n\Delta)^j}{j!}u^n_0\right\|_{B^{s+k}_{p,r}}
\leq\left\|\sum_{j\geq1}\frac{(9t/4)^j}{j!}u^n_0\right\|_{B^{s+k}_{p,r}},
\end{align*}
It follows from Lemma \ref{ley3} that
\begin{align*}
\left(e^{16t/9}-1\right)2^{kn}
\leq\left\|\left(e^{t\ep_n\Delta}-\mathrm{Id}\right)u^n_0\right\|_{B^{s+k}_{p,r}}
\leq\left(e^{9t/4}-1\right)2^{kn},
\end{align*}
which is nothing but the desired result of Proposition \ref{pr1}.
\end{proof}
As a by-product of Proposition \ref{pr1}, we have the following
\begin{proposition}\label{pr2} Let $u^n_0$ be given by Lemma \ref{ley3}. Assume that $(s,p,r)$ satisfies \eqref{con1}.
Then we have for $k\in\{-1,0,1\}$ and $t\in (0,1)$
\begin{align}\label{y1}
\left\|e^{t\ep_n\Delta}u^n_0\cd\na e^{t\ep_n\Delta}u^n_0-u^n_0\cd\na u^n_0\right\|_{B^{s+k}_{p,r}}\leq C t
\end{align}
and \begin{align}\label{y2}
\left\|e^{t\ep_n\Delta}\left(u^n_0\cd\na u^n_0\right)-u^n_0\cd\na u^n_0\right\|_{B^{s}_{p,r}}
\leq  Ct.
\end{align}
\end{proposition}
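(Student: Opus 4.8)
The plan is to dispatch \eqref{y2} first, since it is the softer of the two, and then to reduce \eqref{y1} to a single structural estimate. For \eqref{y2}, put $g:=u^n_0\cd\na u^n_0$. By \eqref{s} the factor $u^n_0$ is spectrally localized in the annulus $\{\tfrac43 2^n\le|\xi|\le\tfrac32 2^n\}$, so $g$ is spectrally supported in the ball $B(0,3\cd 2^n)$. Writing $e^{t\ep_n\De}-\mathrm{Id}=\int_0^t\ep_n\De\,e^{\tau\ep_n\De}\dd\tau$ and invoking Bernstein's inequality (Lemma \ref{lem2.1}) on that ball, together with $\ep_n=2^{-2n}$, gives $\ep_n\|\De e^{\tau\ep_n\De}g\|_{B^s_{p,r}}\les\ep_n 2^{2n}\|g\|_{B^s_{p,r}}=\|g\|_{B^s_{p,r}}$. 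Finally the product law \eqref{cj} and Lemma \ref{ley3} yield $\|g\|_{B^s_{p,r}}\les\|u^n_0\|_{B^{s-1}_{p,r}}\|u^n_0\|_{B^{s+1}_{p,r}}+\|u^n_0\|^2_{B^s_{p,r}}\les 1$, so integrating in $\tau$ produces \eqref{y2}. No cancellation is needed here.

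For \eqref{y1}, set $w_n:=\f(e^{t\ep_n\De}-\mathrm{Id}\g)u^n_0$, so that Proposition \ref{pr1} gives $\|w_n\|_{B^{s+k}_{p,r}}\les t\,2^{kn}$, and expand the difference as $w_n\cd\na u^n_0+u^n_0\cd\na w_n+w_n\cd\na w_n$. For $k=-1,0$ the three pieces are controlled directly by Lemma \ref{lp}; for instance, when $k=0$,
\bal
\|w_n\cd\na u^n_0\|_{B^s_{p,r}}\les\|w_n\|_{B^{s-1}_{p,r}}\|u^n_0\|_{B^{s+1}_{p,r}}+\|w_n\|_{B^s_{p,r}}\|u^n_0\|_{B^s_{p,r}}\les t,\nonumber
\end{align}
and likewise for the remaining two (the purely quadratic term even gains an extra $t$). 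The case $k=1$ is the crux: here the generic product law only delivers $\|w_n\cd\na u^n_0\|_{B^{s+1}_{p,r}}\les\|w_n\|_{L^\infty}\|u^n_0\|_{B^{s+2}_{p,r}}+\cdots\les t\,2^n$, which is far larger than the asserted $t$ when $n$ is large.

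To recover the factor $2^{-n}$ that is lost, I would exploit the divergence-free curl structure of the data. By construction $u^n_0=2^{-n(s+1)}\f(-\pa_2 f_n,\pa_1 f_n,0,\cdots,0\g)$, where $f_n$ is the fixed envelope $\phi(x_1)\prod_{i\ge2}\phi(x_i)$ multiplied by the single oscillation $\cos(\tfrac{17}{12}2^nx_1)$; consequently $w_n=2^{-n(s+1)}\f(-\pa_2\tilde f_n,\pa_1\tilde f_n,0,\cdots,0\g)$ with $\tilde f_n:=(e^{t\ep_n\De}-\mathrm{Id})f_n$ living in the same dyadic block $n$ and satisfying $\|\pa_1\tilde f_n\|_{L^p}\les t\,2^n\|\phi\|^d_{L^p}$ and $\|\pa_i\tilde f_n\|_{L^p}\les t\,\|\phi\|^d_{L^p}$ for $i\ge2$, by the argument of Proposition \ref{pr1}. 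The point is that each $\pa_1$ falls on the oscillation and thus carries a factor $\tfrac{17}{12}2^n$, whereas transverse derivatives carry only $O(1)$; and since $f_n$ occupies the single block $n$, one has $\|\pa_1^m\pa_{i_1}\cdots\pa_{i_l}f_n\|_{B^\sigma_{p,r}}\approx 2^{nm}2^{n\sigma}$ for $i_1,\dots,i_l\ge2$. Decomposing $u^n_0\cd\na=(u^n_0)_1\pa_1+(u^n_0)_2\pa_2$, the crucial cancellation is that the oscillation-enhanced derivative $\pa_1$ is always paired with the oscillation-suppressed component $(u^n_0)_1=-2^{-n(s+1)}\pa_2 f_n$, while the enhanced component $(u^n_0)_2=2^{-n(s+1)}\pa_1 f_n$ is paired with the unenhanced $\pa_2$; hence each of the four scalar products $(u^n_0)_i\pa_i(u^n_0)_j$ carries exactly \emph{one} factor $2^n$ rather than two. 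Estimating these four products by the product law of Lemma \ref{lp} with the single-block identities above gives $\|u^n_0\cd\na u^n_0\|_{B^{s+k}_{p,r}}\les 2^{n(k-s)}$, and running the same computation with one factor replaced by $w_n$ (resp.\ both replaced by $w_n$) reproduces this bound with an extra $t$ (resp.\ $t^2$). Summing the three terms and using $s\ge1$, so that $2^{n(k-s)}\le1$ for $k\le1$, yields \eqref{y1}.

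I expect this $k=1$ estimate to be the main obstacle, precisely because the saving of the stray power of $2^n$ comes from the algebraic fact that each convective term $(u^n_0)_i\pa_i$ contributes exactly one power of $2^n$; this can only be read off from the explicit form of $u^n_0$ and is invisible to any soft product inequality such as Lemma \ref{lp} applied to $u^n_0\cd\na u^n_0$ as a whole. Everything else (the $w_n$-expansion, the heat-semigroup bound in \eqref{y2}, and the cases $k=-1,0$) is routine once this structural estimate is in place.
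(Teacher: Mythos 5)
Your treatment of \eqref{y2} and of the cases $k\in\{-1,0\}$ of \eqref{y1} is correct and essentially coincides with the paper's proof: the paper telescopes the difference as $(a-b)\cdot\nabla a+b\cdot\nabla(a-b)$ with $a=e^{t\ep_n\Delta}u^n_0$, $b=u^n_0$, applies \eqref{cj} together with Proposition \ref{pr1} (the balance $\|a-b\|_{B^{s-1}_{p,r}}\|a\|_{B^{s+1}_{p,r}}\lesssim t2^{-n}\cdot 2^{n}$ is exactly your computation), and handles \eqref{y2} by the same spectral-support observation, bounding the multiplier on $|\xi|\leq 3\cdot2^n$ by $e^{9t}-1\leq Ct$. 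Where you genuinely diverge is $k=1$: the paper's written proof only displays the $B^{s}_{p,r}$ estimate and never returns to $k=\pm1$ (only $k=0$ and \eqref{y2} are invoked in the proof of Theorem \ref{th2}, so the endpoint cases are never used downstream), whereas you correctly diagnose that the soft product law cannot reach $B^{s+1}_{p,r}$ --- it yields at best $t2^{n}$ --- and you supply the anisotropic structural argument needed to close that case. One bookkeeping correction: it is not true that each product $(u^n_0)_i\pa_i(u^n_0)_j$ carries exactly one factor $2^n$; the terms with $j=2$, e.g.\ $2^{-2n(s+1)}\pa_2 f_n\,\pa_1^2 f_n$, carry two powers (the maximum avoided by the curl structure is three, not two). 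What is true, and what your final display actually encodes, is $\|u^n_0\cdot\nabla u^n_0\|_{L^p}\lesssim 2^{-2ns}$, hence $\|u^n_0\cdot\nabla u^n_0\|_{B^{s+k}_{p,r}}\lesssim 2^{n(k-s)}\leq 1$ for $k\leq 1$ because $s\geq 1+\frac{d}{p}>1$, with the extra factor $t$ (resp.\ $t^2$) when one (resp.\ both) of the factors is replaced by $w_n$. So your conclusion stands and in fact proves more than the paper writes down; only the heuristic power count needs adjusting.
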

\begin{proof} By the product law (see \eqref{cj} from Lemma \ref{lp}) and Proposition \ref{pr1}, we have
\bbal
\left\|e^{t\ep_n\Delta}u^n_0\cd\na e^{t\ep_n\Delta}u^n_0-u^n_0\cd\na u^n_0\right\|_{B^{s}_{p,r}}&\leq \left\|e^{t\ep_n\Delta}u^n_0-u^n_0\right\|_{B^{s}_{p,r}}\left\|e^{t\ep_n\Delta}u^n_0\right\|_{B^{s}_{p,r}}
+\left\|e^{t\ep_n\Delta}u^n_0-u^n_0\right\|_{B^{s-1}_{p,r}}\left\|e^{t\ep_n\Delta}u^n_0\right\|_{B^{s+1}_{p,r}}\\
&\quad+\|u^n_0\|_{B^{s}_{p,r}}\left\|e^{t\ep_n\Delta}u^n_0-u^n_0\right\|_{B^{s}_{p,r}}
+\|u^n_0\|_{B^{s-1}_{p,r}}\left\|e^{t\ep_n\Delta}u^n_0-u^n_0\right\|_{B^{s+1}_{p,r}}\\
&\leq Ct.
\end{align*}
Notice that $\mathrm{supp} \ \mathcal{F}\left(u^n_0\cd\na u^n_0\right)
\subset \left\{\xi\in\R^d: \ |\xi|\leq 3\cdot2^n\right\}$, then we have
\bbal
\left\|\left(e^{t\ep_n\Delta}-\mathrm{Id}\right)\left(u^n_0\cd\na u^n_0\right)\right\|_{B^{s}_{p,r}}
\leq \left(e^{9t}-1\right)\cdot\left\|u^n_0\cd\na u^n_0\right\|_{B^{s}_{p,r}}\leq Ct.
\end{align*}
This finishes the proof of Proposition \ref{pr2}.
\end{proof}
\subsection{Estimation of Errors}\label{sec3.3}
The following proposition involving the perturbation of solutions will play a crucial role in the proof of Theorem \ref{th2}.
\begin{proposition}\label{pr3}
Under the assumptions of Theorem \ref{th2}, then we have
\bal\label{YY1}
\left\|\mathbf{S}^0_{t}(u^n_0)-u^n_0+t\mathcal{P}\left(u^n_0\cd\na u^n_0\right)\right\|_{B^{s}_{p,r}}\leq Ct^{2}
\end{align}
and
\bal\label{YY2}
\left\|\mathbf{S}^{\ep_n}_{t}(u^n_0)-e^{t\ep_n\Delta}u^n_0+\int_0^te^{(t-\tau)\ep_n\Delta}\mathcal{P}\left(e^{\tau\ep_n\Delta}u^n_0\cd\na e^{\tau\ep_n\Delta}u^n_0\right)\dd\tau\right\|_{B^{s}_{p,r}}\leq Ct^{2}.
\end{align}
\end{proposition}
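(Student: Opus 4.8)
The two bounds both assert that a second–order expansion of the solution is accurate to order $t^2$, and I would treat them in parallel, giving \eqref{YY1} in detail. The first step is to pass to integral form. Writing $v=\mathbf{S}^0_t(u^n_0)$, which solves $\pa_t v=-\mathcal{P}(v\cd\na v)$, the quantity $E$ inside the norm in \eqref{YY1} satisfies $E(0)=0$ and $\pa_\tau E=-\mathcal{P}(v\cd\na v-u^n_0\cd\na u^n_0)$, so $E(t)=-\int_0^t\mathcal{P}(v\cd\na v-u^n_0\cd\na u^n_0)\dd\tau$ and it suffices to prove $\|\mathcal{P}(v\cd\na v-u^n_0\cd\na u^n_0)\|_{B^s_{p,r}}\les\tau$ and integrate. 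For \eqref{YY2}, setting $U(\tau)=e^{\tau\ep_n\De}u^n_0$ and $u=\mathbf{S}^{\ep_n}_\tau(u^n_0)$, Duhamel's formula rewrites the bracketed term as $-\int_0^t e^{(t-\tau)\ep_n\De}\mathcal{P}(u\cd\na u-U\cd\na U)\dd\tau$; since $e^{(t-\tau)\ep_n\De}$ is uniformly bounded on $B^s_{p,r}$ and $\mathcal{P}$ is bounded, it again suffices to bound $\|\mathcal{P}(u\cd\na u-U\cd\na U)\|_{B^s_{p,r}}\les\tau$.

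The engine of the estimate is a set of a priori bounds on the difference $w:=v-u^n_0$ (and likewise $z:=u-U$), which is divergence free and vanishes at $\tau=0$. Theorem \ref{th1}(1) supplies the uniform bound $\|v\|_{B^s_{p,r}}\le C$ and, with $\ga=s+m$ and Lemma \ref{ley3}, the higher–regularity bounds $\|v\|_{B^{s+m}_{p,r}}\le C_2\|u^n_0\|_{B^{s+m}_{p,r}}\approx 2^{mn}$ for $m\ge 0$. Writing the equation of $w$ in transport form $\pa_\tau w+v\cd\na w=\mathcal{Q}(v\cd\na v)-v\cd\na u^n_0$ and invoking Lemma \ref{cs} together with the $B^{s-1}_{p,r}$ bound of Lemma \ref{lem:P}, the product law of Lemma \ref{lp}, and $\|u^n_0\|_{B^{s-1}_{p,r}}\approx 2^{-n}$, a Gronwall argument yields the two crucial bounds $\|w\|_{B^{s-1}_{p,r}}\les 2^{-n}\tau$ and $\|w\|_{B^s_{p,r}}\les\tau$ on $[0,\tau]$; the bounds for $z$ follow in the same way from $\pa_\tau z-\ep_n\De z=-\mathcal{P}(u\cd\na u)$.

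With these in hand I split $v\cd\na v-u^n_0\cd\na u^n_0=w\cd\na v+u^n_0\cd\na w$ and write $\mathcal{P}=\mathrm{Id}-\mathcal{Q}$. The $\mathcal{Q}$–parts are immediate: the first bound of Lemma \ref{lem:P} and the identity $\mathcal{Q}(u^n_0\cd\na w)=\mathcal{Q}(w\cd\na u^n_0)$ give $\|\mathcal{Q}(w\cd\na v)\|_{B^s_{p,r}}+\|\mathcal{Q}(u^n_0\cd\na w)\|_{B^s_{p,r}}\les\|w\|_{B^s_{p,r}}(\|v\|_{B^s_{p,r}}+\|u^n_0\|_{B^s_{p,r}})\les\tau$. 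For the identity part, the product law \eqref{cj} gives $\|w\cd\na v\|_{B^s_{p,r}}\les\|w\|_{B^{s-1}_{p,r}}\|v\|_{B^{s+1}_{p,r}}+\|w\|_{B^s_{p,r}}\|v\|_{B^s_{p,r}}\les 2^{-n}\tau\cd 2^n+\tau\les\tau$, the point being that the factor $2^{-n}$ from the low norm of $w$ exactly absorbs the factor $2^n$ from the high norm of $v$.

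The one delicate term is $\|u^n_0\cd\na w\|_{B^s_{p,r}}$, where the derivative lands on $w$: the product law would cost $\|u^n_0\|_{B^{s-1}_{p,r}}\|w\|_{B^{s+1}_{p,r}}\approx 2^{-n}\|w\|_{B^{s+1}_{p,r}}$, forcing me to prove $\|w\|_{B^{s+1}_{p,r}}\les 2^n\tau$, whereas the crude product estimates deliver only $2^{2n}\tau$. This gap is the heart of the matter, and I would close it exactly as in the proof of Proposition \ref{pr2}, by exploiting the single–block localization \eqref{s} of the data. Because $u^n_0$ lives in $|\xi|\approx 2^n$, its self–interaction $u^n_0\cd\na u^n_0$ is supported in $|\xi|\le 3\cd 2^n$, so Bernstein's inequality replaces the lossy product law and gives $\|\mathcal{P}(u^n_0\cd\na u^n_0)\|_{B^{s+1}_{p,r}}\les 2^n\|\mathcal{P}(u^n_0\cd\na u^n_0)\|_{B^s_{p,r}}\les 2^n$; consequently only the frequencies $\gtrsim 2^{n+1}$ of $w$ remain dangerous, and these are generated purely by the nonlinear cascade out of frequency $2^n$ and are therefore of higher order in $\tau$. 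Splitting $w$ accordingly—the frequencies $\lesssim 2^n$ being controlled by $\|w\|_{B^s_{p,r}}\les\tau$ and $\|u^n_0\|_{L^p}\approx 2^{-ns}$, the high frequencies by the cascade estimate fed by the seed above—produces the required $\|u^n_0\cd\na w\|_{B^s_{p,r}}\les\tau$; for \eqref{YY2} the same splitting works, with the choice $\ep_n=2^{-2n}$ ensuring that $e^{(t-\tau)\ep_n\De}$ merely damps the data frequency by an $O(1)$ factor while smoothing the cascaded high frequencies. Integrating $\les\tau$ over $[0,t]$ then gives the stated $Ct^2$; the rigorous control of these cascaded high frequencies is the step I expect to be the most technical.
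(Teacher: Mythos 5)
Your overall architecture coincides with the paper's: pass to the integral (Duhamel) formulation, reduce both bounds to showing $\|\mathcal{P}(v\cdot\nabla v-u^n_0\cdot\nabla u^n_0)\|_{B^s_{p,r}}\lesssim\tau$ for the difference $w=v-u^n_0$ (resp.\ $z=u-e^{\tau\ep_n\Delta}u^n_0$), and control $w$ in the three norms $B^{s-1}_{p,r}$, $B^{s}_{p,r}$, $B^{s+1}_{p,r}$ with the weights $2^{-n}$, $1$, $2^{n}$. Your preliminary bounds $\|w\|_{B^{s-1}_{p,r}}\lesssim 2^{-n}\tau$ and $\|w\|_{B^{s}_{p,r}}\lesssim\tau$ are exactly the paper's estimate \eqref{z8} for $k=-1,0$ (the paper reads them off the integral equation directly rather than running a transport/Gr\"onwall argument, but that difference is immaterial).

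The gap is at the term where the derivative falls on $w$. You correctly identify that you need $\|w\|_{B^{s+1}_{p,r}}\lesssim 2^{n}\tau$, but you then assert that ``the crude product estimates deliver only $2^{2n}\tau$'' and invent a frequency-cascade argument to compensate. That assertion is wrong, and the cascade argument is both unnecessary and, as written, not a proof. The bound $\|w\|_{B^{s+1}_{p,r}}\lesssim 2^{n}\tau$ is simply \eqref{z8} with $k=1$: from $w=-\int_0^{\tau}\mathcal{P}(v\cdot\nabla v)\,\dd\sigma$ and the tame ($L^\infty$-based) product law of Lemma \ref{lp}, which at regularity $s+1$ gives $\|v\cdot\nabla v\|_{B^{s+1}_{p,r}}\lesssim\|v\|_{B^{s-1}_{p,r}}\|v\|_{B^{s+2}_{p,r}}+\|v\|_{B^{s}_{p,r}}\|v\|_{B^{s+1}_{p,r}}$ via $B^{s-1}_{p,r}\hookrightarrow L^{\infty}$, combined with the persistence bounds $\|v\|_{B^{s+2}_{p,r}}\lesssim\|u^n_0\|_{B^{s+2}_{p,r}}\approx 2^{2n}$, $\|v\|_{B^{s+1}_{p,r}}\lesssim 2^{n}$, $\|v\|_{B^{s-1}_{p,r}}\lesssim 2^{-n}$ (Theorem \ref{th1}(1) and Lemma \ref{ley3}), one gets $\|v\cdot\nabla v\|_{B^{s+1}_{p,r}}\lesssim 2^{-n}\cdot 2^{2n}+2^{n}\approx 2^{n}$ and hence $\|w\|_{B^{s+1}_{p,r}}\lesssim 2^{n}\tau$ after integration. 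The point you missed is that in the tame estimate the $B^{s+2}_{p,r}$ norm of $v$ is always paired with an $L^{\infty}$ factor, i.e.\ with $\|v\|_{B^{s-1}_{p,r}}\approx 2^{-n}$, never with another high norm; nothing of size $2^{2n}$ survives. By contrast, your cascade argument cannot stand as written: $v=\mathbf{S}^{0}_{\tau}(u^n_0)$ is not frequency-localized for $\tau>0$, so the claim that the frequencies of $w$ above $2^{n+1}$ are ``of higher order in $\tau$'' is itself an a priori estimate that would require a bootstrap you do not supply --- you concede as much by calling it the most technical step, whereas in the paper no such step exists. The same remark applies verbatim to your treatment of \eqref{YY2}.
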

\begin{proof} We first prove \eqref{YY1}. By the Mean value theorem, one has
\bal\label{y}
\mathbf{S}^0_{t}(u^n_0)-u^n_0=-\int_0^t\mathcal{P}\left(\mathbf{S}^0_{\tau}(u^n_0)\cd\na \mathbf{S}^0_{\tau}(u^n_0)\right)\dd\tau.
\end{align}
By Minkowski's inequality and Lemmas \ref{lp} and \ref{lem:P}, we deduce that for $k\in\{-1,0,1\}$
\bal\label{z8}
\left\|\mathbf{S}^0_{t}(u^n_0)-u^n_0\right\|_{B^{s+k}_{p,r}}
\leq&~ \int_0^t\left\|\mathcal{P}\left(\mathbf{S}^0_{\tau}(u^n_0)\cd\na \mathbf{S}^0_{\tau}(u^n_0)\right)\right\|_{B^{s+k}_{p,r}}\dd\tau\nonumber\\
\leq&~ C\int_0^t\left\|\mathbf{S}^{0}_{\tau}(u^n_0)\right\|_{B^{s+k}_{p,r}} \|\mathbf{S}^{0}_{\tau}(u^n_0)\|_{B^{s}_{p,r}}+\left\|\mathbf{S}^{0}_{\tau}(u^n_0)\right\|_{B^{s-1}_{p,r}} \left\|\mathbf{S}^{0}_{\tau}(u^n_0)\right\|_{B^{s+k+1}_{p,r}}\dd\tau\nonumber\\
\leq&~ Ct2^{kn}.
\end{align}
From \eqref{y}, it follows that
\bal\label{l4}
\mathbf{S}^0_{t}(u^n_0)-u^n_0+t\mathcal{P}\left(u^n_0\cd\na u^n_0\right)=\int_0^t\mathcal{P}\left(u^n_0\cd\na u^n_0-\mathbf{S}^0_{\tau}(u^n_0)\cd\na \mathbf{S}^0_{\tau}(u^n_0)\right)\dd\tau.
\end{align}
Thus, we obtain from \eqref{z8} that
\bbal
&\left\|\mathbf{S}^0_{t}(u^n_0)-u^n_0+t\mathcal{P}\left(u^n_0\cd\na u^n_0\right)\right\|_{B^{s}_{p,r}}\\
\leq&~ \int_0^t\left\|\mathcal{P}\left(u^n_0\cd\na u^n_0-\mathbf{S}^0_{\tau}(u^n_0)\cd\na \mathbf{S}^0_{\tau}(u^n_0)\right)\right\|_{B^{s}_{p,r}}\dd\tau\\
\leq&~ C\int_0^t\left\|u^n_0-\mathbf{S}^0_{\tau}(u^n_0)\right\|_{B^{s}_{p,r}} \|u^n_0\|_{B^{s}_{p,r}}+\left\|u^n_0-\mathbf{S}^0_{\tau}(u^n_0)\right\|_{B^{s-1}_{p,r}} \|u^n_0\|_{B^{s+1}_{p,r}}\dd\tau\\
&+ C\int_0^t\left\|\mathbf{S}^0_{\tau}(u^n_0)\right\|_{B^{s}_{p,r}}\left\|u^n_0-\mathbf{S}^0_{\tau}(u^n_0)\right\|_{B^{s}_{p,r}}+ \left\|\mathbf{S}^0_{\tau}(u^n_0)\right\|_{B^{s-1}_{p,r}}\left\|u^n_0-\mathbf{S}^0_{\tau}(u^n_0)\right\|_{B^{s+1}_{p,r}}\dd\tau\\
\leq&~ Ct^2.
\end{align*}
We next prove \eqref{YY2}. By Duhamel's principle, one has
\bal\label{l}
\mathbf{S}^{\ep_n}_{t}(u^n_0)=e^{t\ep_n\Delta}u^n_0-\int_0^te^{(t-\tau)\ep_n\Delta}\mathcal{P}\left(\mathbf{S}^{\ep_n}_{\tau}(u^n_0)\cd\na \mathbf{S}^{\ep_n}_{\tau}(u^n_0)\right)\dd\tau,
\end{align}
which gives that for $k\in\{-1,0,1\}$
\bal\label{LL}
\left\|\mathbf{S}^{\ep_n}_{t}(u^n_0)-e^{t\ep_n\Delta}u^n_0\right\|_{B^{s+k}_{p,r}}
\leq&~ C\int_0^t\left\|\mathcal{P}\left(\mathbf{S}^{\ep_n}_{\tau}(u^n_0)\cd\na \mathbf{S}^{\ep_n}_{\tau}(u^n_0)\right)\right\|_{B^{s+k}_{p,r}}\dd\tau\nonumber\\
\leq&~ C\int_0^t\left\|\mathbf{S}^{\ep_n}_{\tau}(u^n_0)\right\|_{B^{s+k}_{p,r}} \left\|\mathbf{S}^{\ep_n}_{\tau}(u^n_0)\right\|_{B^{s}_{p,r}}+\left\|\mathbf{S}^{\ep_n}_{\tau}(u^n_0)\right\|_{B^{s-1}_{p,r}} \left\|\mathbf{S}^{\ep_n}_{\tau}(u^n_0)\right\|_{B^{s+k+1}_{p,r}}\dd\tau\nonumber\\
\leq&~ Ct2^{kn}.
\end{align}
From \eqref{l}, we have
\bbal
\mathbf{S}^{\ep_n}_{t}(u^n_0)-e^{t\ep_n\Delta}u^n_0&+\int_0^te^{(t-\tau)\ep_n\Delta}\mathcal{P}\left(e^{\tau\ep_n\Delta}u^n_0\cd\na e^{\tau\ep_n\Delta}u^n_0\right)\dd\tau\nonumber\\
&=-\int_0^te^{(t-\tau)\ep_n\Delta}\mathcal{P}\left(\mathbf{S}^{\ep_n}_{\tau}(u^n_0)\cd\na \mathbf{S}^{\ep_n}_{\tau}(u^n_0)-e^{\tau\ep_n\Delta}u^n_0\cd\na e^{\tau\ep_n\Delta}u^n_0\right)\dd\tau,
\end{align*}
from which and Minkowski's inequality, we obtain that
\bbal
&\left\|\mathbf{S}^{\ep_n}_{t}(u^n_0)-e^{t\ep_n\Delta}u^n_0+\int_0^te^{(t-\tau)\ep_n\Delta}\mathcal{P}\left(e^{\tau\ep_n\Delta}u^n_0\cd\na e^{\tau\ep_n\Delta}u^n_0\right)\dd\tau\right\|_{B^{s}_{p,r}}\\
\leq&~ C\int_0^t\left\|\mathcal{P}\left(\mathbf{S}^{\ep_n}_{\tau}(u^n_0)\cd\na \mathbf{S}^{\ep_n}_{\tau}(u^n_0)-e^{\tau\ep_n\Delta}u^n_0\cd\na e^{\tau\ep_n\Delta}u^n_0\right)\right\|_{B^{s}_{p,r}}\dd\tau\\
\leq&~ C\int_0^t\left\|\mathbf{S}^{\ep_n}_{\tau}(u^n_0)-e^{\tau\ep_n\Delta}u^n_0\right\|_{B^{s}_{p,r}} \|\mathbf{S}^{\ep_n}_{\tau}(u^n_0)\|_{B^{s}_{p,r}}+\left\|\mathbf{S}^{\ep_n}_{\tau}(u^n_0)-e^{\tau\ep_n\Delta}u^n_0\right\|_{B^{s-1}_{p,r}} \|\mathbf{S}^{\ep_n}_{\tau}(u^n_0)\|_{B^{s+1}_{p,r}}\dd\tau\\
&+ C\int_0^t\left\|e^{\tau\ep_n\Delta}u^n_0\right\|_{B^{s}_{p,r}}\left\|\mathbf{S}^{\ep_n}_{\tau}(u^n_0)-e^{\tau\ep_n\Delta}u^n_0\right\|_{B^{s}_{p,r}}+ \left\|e^{\tau\ep_n\Delta}u^n_0\right\|_{B^{s-1}_{p,r}}\left\|\mathbf{S}^{\ep_n}_{\tau}(u^n_0)-e^{\tau\ep_n\Delta}u^n_0\right\|_{B^{s+1}_{p,r}}\dd\tau.
\end{align*}
Using \eqref{LL} to the above, then we complete the proof of Proposition \ref{pr3}.
\end{proof}
{\bf Proof of Theorem \ref{th2}.}\;
From \eqref{U} and \eqref{V}, it follows that
\bbal
\mathbf{S}^{\ep_n}_{t}(u^n_0)-\mathbf{S}^{0}_{t}(u^n_0)&=\left(e^{t\ep_n\Delta}u^n_0-u^n_0\right)-\int_0^te^{(t-\tau)\ep_n\Delta}\mathcal{P}\left(e^{\tau\ep_n\Delta}u^n_0\cd\na e^{\tau\ep_n\Delta}u^n_0-u^n_0\cd\na u^n_0\right)\dd\tau\\
&\quad-\int_0^t\left(e^{(t-\tau)\ep_n\Delta}-\mathrm{Id}\right)\mathcal{P}\left(u^n_0\cd\na u^n_0\right)\dd\tau-\left[\mathbf{S}^0_{t}(u^n_0)-u^n_0+t\mathcal{P}\left(u^n_0\cd\na u^n_0\right)\right]\\
&\quad+\left[\mathbf{S}^{\ep_n}_{t}(u^n_0)-e^{t\ep_n\Delta}u^n_0+\int_0^te^{(t-\tau)\ep_n\Delta}\mathcal{P}\left(e^{\tau\ep_n\Delta}u^n_0\cd\na e^{\tau\ep_n\Delta}u^n_0\right)\dd\tau\right].
\end{align*}
Using Minkowski's inequality and Proposition \ref{pr2}, yields
\bbal
\left\|\int_0^te^{(t-\tau)\ep_n\Delta}\mathcal{P}\left(e^{\tau\ep_n\Delta}u^n_0\cd\na e^{\tau\ep_n\Delta}u^n_0-u^n_0\cd\na u^n_0\right)\dd\tau\right\|_{B^{s}_{p,r}}
\leq Ct^2
\end{align*}
and
\bbal
\left\|\int_0^t\left(e^{(t-\tau)\ep_n\Delta}-\mathrm{Id}\right)\mathcal{P}\left(u^n_0\cd\na u^n_0\right)\dd\tau\right\|_{B^{s}_{p,r}}
\leq Ct^2.
\end{align*}
Thus, from Proposition \ref{pr1}-\ref{pr3} we deduce that for samll time $t\in(0,T_0]$
\bbal
\f\|\mathbf{S}^{\ep_n}_{t}(u^n_0)-\mathbf{S}^{0}_{t}(u^n_0)\g\|_{B^s_{p,r}}&\geq \left\|e^{t\ep_n\Delta}u^n_0-u^n_0\right\|_{B^s_{p,r}}-Ct^2\geq ct-Ct^2\geq c_0t.
\end{align*}
This completes the proof of Theorem \ref{th2}. {\hfill $\square$}
\section{Proof of Theorem \ref{th3}}\label{sec4}

We modify the initial data $u^{n,k}_0$ by
\begin{align*}
u^{n,k}_0(x):=2^{-n(s+1)}
\f(
-\pa_2f_{n}(x-k\mathbf{e}),\;
\pa_1f_{n}(x-k\mathbf{e}),\;
0,\;
\cdots,\;
0
\g)\quad \text{with}\quad
\mathbf{e}:=(1,0,\cdots,0),
\end{align*}
where $f_{n}$ be given in Lemma \ref{ley3} and $k=k(n)$ can be chosen later.

\begin{proposition}\label{pro5-2}
Under the assumptions of Theorem \ref{th3}, we have for any $\ep_n\geq0$
\bbal
\f\|\mathbf{S}^{\ep_n}_{t}(S_n\psi+u^{n,k}_0)-\mathbf{S}^{\ep_n}_{t}(S_n\psi)
-\mathbf{S}^{\ep_n}_{t}(u^{n,k}_0)\g\|_{B^{s}_{p,r}}\leq C2^{\fr{n}2}\f(\int_0^t\f\|\mathbf{S}^{\ep_n}_{\tau}(u^{n,k}_0)\g\|_{B^{s}_{p,r}}\dd\tau\g)^{\fr12}.
\end{align*}
\end{proposition}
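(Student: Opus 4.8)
The plan is to write $U:=\mathbf{S}^{\ep_n}_t(S_n\psi)$, $V:=\mathbf{S}^{\ep_n}_t(u^{n,k}_0)$ and $W:=\mathbf{S}^{\ep_n}_t(S_n\psi+u^{n,k}_0)$, and to study the interaction remainder $Z:=W-U-V$, which carries vanishing initial data $Z|_{t=0}=0$. Since $U,V,W$ each solve the projected system $\pa_t\cdot-\ep_n\De\cdot+\mathcal{P}(\cdot\cd\na\cdot)=0$, substituting $W=U+V+Z$ and using $W-U=V+Z$, $W-V=U+Z$ gives the algebraic identity $W\cd\na W-U\cd\na U-V\cd\na V=W\cd\na Z+(V+Z)\cd\na U+(U+Z)\cd\na V$. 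Peeling the full drift term off the Leray projection via $\mathcal{P}=\mathrm{Id}-\mathcal{Q}$, I obtain a transport--diffusion equation for $Z$ with divergence-free drift $W$,
\[
\pa_tZ+W\cd\na Z-\ep_n\De Z=\mathcal{Q}(W\cd\na Z)-\mathcal{P}\big((V+Z)\cd\na U+(U+Z)\cd\na V\big).
\]

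The key observation is that running the estimate at the natural level $B^s_{p,r}$ is too lossy: the genuinely quadratic source $\mathcal{P}(U\cd\na V+V\cd\na U)$ forces a derivative onto the high-frequency factor $V$, costing a full power $2^n$. I would therefore work one derivative below, at $B^{s-1}_{p,r}$, and recover the loss afterwards. Applying Lemma \ref{cs} with $\sigma=s-1$, with $Z|_{t=0}=0$ and with the exponential factor $e^{CV_p(W,t)}$ harmless (since $\|W\|_{L^\infty_TB^s_{p,r}}$ is uniformly bounded by Theorem \ref{th1}), reduces everything to estimating the right-hand side $g$ in $B^{s-1}_{p,r}$. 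The $Z$-dependent pieces $\mathcal{Q}(W\cd\na Z)$, $\mathcal{P}(Z\cd\na U)$, $\mathcal{P}(Z\cd\na V)$ are each bounded by $\|Z\|_{B^{s-1}_{p,r}}$ times one of $\|W\|_{B^s_{p,r}},\|U\|_{B^s_{p,r}},\|V\|_{B^s_{p,r}}$ (all uniformly bounded) through the product law in Lemma \ref{lp} together with Lemma \ref{lem:P}, so they feed Gronwall. Crucially, the two source pieces obey $\|\mathcal{P}(U\cd\na V)\|_{B^{s-1}_{p,r}}+\|\mathcal{P}(V\cd\na U)\|_{B^{s-1}_{p,r}}\les\|U\|_{B^{s-1}_{p,r}}\|V\|_{B^s_{p,r}}\les\|V\|_{B^s_{p,r}}$, because the smooth factor $U$ enters only through $\|U\|_{B^{s-1}_{p,r}}\les\|S_n\psi\|_{B^s_{p,r}}\les\|\psi\|_{B^s_{p,r}}$, which carries \emph{no} power of $2^n$. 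Gronwall's lemma then yields $\|Z\|_{L^\infty_tB^{s-1}_{p,r}}\les\int_0^t\|V_\tau\|_{B^s_{p,r}}\dd\tau$.

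For the top regularity I would simply use the triangle inequality together with the propagation of higher regularity from Theorem \ref{th1} applied with $\gamma=s+1$:
\[
\|Z\|_{B^{s+1}_{p,r}}\le\|W\|_{B^{s+1}_{p,r}}+\|U\|_{B^{s+1}_{p,r}}+\|V\|_{B^{s+1}_{p,r}}\les\|S_n\psi\|_{B^{s+1}_{p,r}}+\|u^{n,k}_0\|_{B^{s+1}_{p,r}}\les 2^n,
\]
since $\|S_n\psi\|_{B^{s+1}_{p,r}}\les 2^n\|\psi\|_{B^s_{p,r}}$ and $\|u^{n,k}_0\|_{B^{s+1}_{p,r}}\approx 2^n$ by Lemma \ref{ley3} (the norm being translation invariant, so independent of $k$). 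Finally the interpolation inequality $\|Z\|_{B^s_{p,r}}\les\|Z\|_{B^{s-1}_{p,r}}^{1/2}\|Z\|_{B^{s+1}_{p,r}}^{1/2}$ combines the two bounds into
\[
\|Z\|_{B^s_{p,r}}\les\Big(\int_0^t\|V_\tau\|_{B^s_{p,r}}\dd\tau\Big)^{1/2}\,2^{n/2},
\]
which is exactly the assertion.

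The main obstacle is precisely the decision to drop one derivative, and then verifying that the bookkeeping is clean on both ends. At level $s-1$ one must confirm that the source truly reduces to $\|V\|_{B^s_{p,r}}$ with no hidden factor of $2^n$ — this is where the low norm of the smooth factor $U$ and the refined bilinear bound in Lemma \ref{lem:P} (in particular the minimum placing the extra derivative on $U$) are essential. At level $s+1$ one needs the bound $\|Z\|_{B^{s+1}_{p,r}}\les 2^n$ to hold uniformly in $\ep_n\in[0,1]$ and in the translation parameter $k$, which follows since Theorem \ref{th1} and Lemma \ref{ley3} are uniform in both. The interpolation then distributes the single power $2^n$ symmetrically, producing the advertised factor $2^{n/2}$ and, automatically, the square root of the time integral.
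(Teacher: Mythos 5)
Your proposal is correct and follows essentially the same route as the paper: the same remainder $\mathbf{w}=Z$ with zero initial data, the same transport--diffusion equation with drift $\mathbf{S}^{\ep_n}_{t}(S_n\psi+u^{n,k}_0)$, the Gr\"onwall estimate at level $B^{s-1}_{p,r}$ where the source costs only $\|\mathbf{S}^{\ep_n}_{\tau}(u^{n,k}_0)\|_{B^{s}_{p,r}}$, and the interpolation with the $B^{s+1}_{p,r}$ bound of size $2^{n}$. Your write-up is in fact slightly more explicit than the paper about why $\|Z\|_{B^{s+1}_{p,r}}\lesssim 2^{n}$ holds uniformly.
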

\begin{proof} Theorem \ref{th1} tells us that $\mathbf{S}^{\ep_n}_{t}(S_n\psi+u^{n,k}_0),\mathbf{S}^{\ep_n}_{t}(S_n\psi),\mathbf{S}^{\ep_n}_{t}(u^{n,k}_0)\in \mathcal{C}([0,T];B^s_{p,r})$ and has common lifespan $T\thickapprox1$. Moreover, there holds
\bbal
\f\|\mathbf{S}^{\ep_n}_{t}(S_n\psi+u^{n,k}_0),\mathbf{S}^{\ep_n}_{t}(S_n\psi),\mathbf{S}^{\ep_n}_{t}(u^{n,k}_0)\g\|_{L^\infty_TB^s_{p,r}}\leq C.
\end{align*}
We set $\mathbf{w}:=\mathbf{S}^{\ep_n}_{t}(S_n\psi+u^{n,k}_0)-\mathbf{S}^{\ep_n}_{t}(S_n\psi)-\mathbf{S}^{\ep_n}_{t}(u^{n,k}_0)$. It is obvious that $\mathbf{w}$ solves
\begin{equation}\label{z}
\begin{cases}
\partial_t\mathbf{w}-\ep_n\Delta \mathbf{w}+\mathbf{S}^{\ep_n}_{t}(S_n\psi+u^{n,k}_0)\cdot\nabla\mathbf{w}
=\mathcal{Q}\f(\mathbf{w}\cdot\nabla\mathbf{S}^{\ep_n}_{t}(S_n\psi+u^{n,k}_0)\g)\\
~~~~~~
-\mathcal{P}\f(\mathbf{w}\cdot\nabla\big(\mathbf{S}^{\ep_n}_{t}(S_n\psi)+\mathbf{S}^{\ep_n}_{t}(u^{n,k}_0)\big)
+\mathbf{S}^{\ep_n}_{\tau}(S_n\psi)\cdot\nabla\mathbf{S}^{\ep_n}_{\tau}(u^{n,k}_0)
+\mathbf{S}^{\ep_n}_{\tau}(u^{n,k}_0)\cdot\nabla\mathbf{S}^{\ep_n}_{\tau}(S_n\psi)\g),\\
\mathbf{w}(0,x)=0.
\end{cases}
\end{equation}
By the interpolation inequality, we obtain
\bal\label{lyz}
\|\mathbf{w}\|_{B^s_{p,r}}\leq C\|\mathbf{w}\|^{\fr12}_{B^{s-1}_{p,r}}\|\mathbf{w}\|^{\fr12}_{B^{s+1}_{p,r}}
\leq C2^{\fr{n}2}\|\mathbf{w}\|^{\fr12}_{B^{s-1}_{p,r}}.
\end{align}
Using the regularity estimate of transport-diffusion equation from Lemma \ref{cs}, and then Lemmas \ref{lp} and \ref{lem:P}, we obtain that
\bbal
\f\|\mathbf{w}\g\|_{B^{s-1}_{p,r}}&\leq C\int_0^t\f\|\mathbf{w}\g\|_{B^{s-1}_{p,r}}\dd\tau+C\int_0^t\f\|\mathbf{S}^{\ep_n}_{\tau}(u^{n,k}_0)\g\|_{B^{s}_{p,r}}\dd\tau.
\end{align*}
Combining Gr\"{o}nwall's  inequality and  \eqref{lyz} completes the proof of Proposition \ref{pro5-2}.
\end{proof}
\begin{proposition}\label{pro5-1}
Under the assumptions of Theorem \ref{th3}, we have for any $\ep_n\geq0$
\bbal
\f\|\mathbf{S}^{\ep_n}_{t}(\psi+u^{n,k}_0)-\mathbf{S}^{\ep_n}_{t}(S_n\psi+u^{n,k}_0)\g\|_{B^{s}_{p,r}}\leq C\f\|(\mathrm{Id}-S_n)\psi\g\|_{B^{s}_{p,r}}.
\end{align*}
\end{proposition}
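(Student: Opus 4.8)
The plan is to write the difference of the two Navier--Stokes flows as the solution of a transport--diffusion equation and then run a two--level argument: a Lipschitz bound at the lower regularity $B^{s-1}_{p,r}$, followed by a top--order bound in $B^{s}_{p,r}$ that borrows the extra smoothness of the truncated datum. Set $u:=\mathbf{S}^{\ep_n}_t(\psi+u^{n,k}_0)$, $v:=\mathbf{S}^{\ep_n}_t(S_n\psi+u^{n,k}_0)$ and $w:=u-v$. Both are divergence free, hence so is $w$, and using $u\cd\na u-v\cd\na v=u\cd\na w+w\cd\na v$ together with $\mathcal{P}=\mathrm{Id}-\mathcal{Q}$, the difference solves
\[
\pa_t w+u\cd\na w-\ep_n\De w=\mathcal{Q}(u\cd\na w)-\mathcal{P}(w\cd\na v),\qquad w|_{t=0}=(\mathrm{Id}-S_n)\psi.
\]
The key structural choice is to transport by $u$ (so that the derivative--losing factor $\na v$ falls on the smooth solution $v$, not on $u$ which carries only $B^{s}_{p,r}$ regularity). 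Since $u$ is divergence free and bounded in $L^\infty_TB^s_{p,r}$ by Theorem \ref{th1}(1), the drift $V_p(u,t)$ of Lemma \ref{cs} is $\le C$ uniformly in $n$ on the common lifespan $T\approx 1$.

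First I would estimate $w$ at level $s-1$. Applying Lemma \ref{cs} with $\sigma=s-1$, the $\mathcal{Q}$ term is controlled by the second estimate of Lemma \ref{lem:P}, $\|\mathcal{Q}(u\cd\na w)\|_{B^{s-1}_{p,r}}\le C\|w\|_{B^{s-1}_{p,r}}\|u\|_{B^s_{p,r}}$, and the $\mathcal{P}$ term by the product law of Lemma \ref{lp}, $\|w\cd\na v\|_{B^{s-1}_{p,r}}\le C\|w\|_{B^{s-1}_{p,r}}\|v\|_{B^s_{p,r}}$, with $\|u\|_{B^s_{p,r}},\|v\|_{B^s_{p,r}}\le C$. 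Gr\"{o}nwall's inequality then gives
\[
\|w\|_{L^\infty_TB^{s-1}_{p,r}}\le C\|(\mathrm{Id}-S_n)\psi\|_{B^{s-1}_{p,r}}\le C2^{-n}\|(\mathrm{Id}-S_n)\psi\|_{B^s_{p,r}},
\]
the last step because $(\mathrm{Id}-S_n)\psi$ is spectrally supported in $\{|\xi|\gtrsim 2^n\}$.

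The main point is the top--order estimate. Applying Lemma \ref{cs} with $\sigma=s$, the $\mathcal{Q}$ term is harmless by the first estimate of Lemma \ref{lem:P}, $\|\mathcal{Q}(u\cd\na w)\|_{B^s_{p,r}}\le C\|w\|_{B^s_{p,r}}$, whereas $\mathcal{P}(w\cd\na v)$ loses one derivative: by \eqref{cj},
\[
\|w\cd\na v\|_{B^s_{p,r}}\le C\big(\|w\|_{B^{s-1}_{p,r}}\|v\|_{B^{s+1}_{p,r}}+\|w\|_{B^s_{p,r}}\|v\|_{B^s_{p,r}}\big).
\]
Here I would exploit that $v$ is the flow of the smooth datum $S_n\psi+u^{n,k}_0$: since $\|S_n\psi\|_{B^{s+1}_{p,r}}\le C2^n\|\psi\|_{B^s_{p,r}}$ and, by translation invariance together with Lemma \ref{ley3}, $\|u^{n,k}_0\|_{B^{s+1}_{p,r}}\approx 2^n$, the higher--regularity bound of Theorem \ref{th1}(1) yields $\|v\|_{L^\infty_TB^{s+1}_{p,r}}\le C2^n$. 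Combining with the previous step, the dangerous product is balanced,
\[
\|w\|_{B^{s-1}_{p,r}}\|v\|_{B^{s+1}_{p,r}}\le C2^{-n}\|(\mathrm{Id}-S_n)\psi\|_{B^s_{p,r}}\cdot 2^n=C\|(\mathrm{Id}-S_n)\psi\|_{B^s_{p,r}},
\]
so the forcing in $B^s_{p,r}$ is $\le C\|w\|_{B^s_{p,r}}+C\|(\mathrm{Id}-S_n)\psi\|_{B^s_{p,r}}$. Feeding this into \eqref{ES2} with initial norm $\|w(0)\|_{B^s_{p,r}}=\|(\mathrm{Id}-S_n)\psi\|_{B^s_{p,r}}$ and closing with Gr\"{o}nwall's inequality gives the claimed bound.

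The crux is exactly the derivative loss in $\mathcal{P}(w\cd\na v)$: the scheme closes only because the extra derivative lands on the spectrally truncated solution $v$, whose $B^{s+1}_{p,r}$ norm grows like $2^n$, while the data difference lives at high frequency, making its $B^{s-1}_{p,r}$ norm smaller than its $B^s_{p,r}$ norm by a factor $2^{-n}$; the two powers of $2^{n}$ cancel. A minor technical point I would address is that Lemma \ref{cs} is stated for smooth solutions, so strictly one first derives the a priori estimates for mollified data and passes to the limit.
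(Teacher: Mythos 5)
Your proposal is correct and follows essentially the same route as the paper: the same transport--diffusion equation for the difference $\mathbf{v}$ with drift $\mathbf{S}^{\ep_n}_{t}(\psi+u^{n,k}_0)$ and source $\mathcal{Q}(\cdot)-\mathcal{P}(\mathbf{v}\cdot\nabla\mathbf{S}^{\ep_n}_{t}(S_n\psi+u^{n,k}_0))$, the same two-level scheme ($B^{s-1}_{p,r}$ estimate via Lemma \ref{cs} and Lemma \ref{lem:P}, then the $B^{s}_{p,r}$ estimate via \eqref{cj}), and the same cancellation between the $2^{n}$ from $\|\mathbf{S}^{\ep_n}_{t}(S_n\psi+u^{n,k}_0)\|_{B^{s+1}_{p,r}}$ and the $2^{-n}$ gained from the high-frequency support of $(\mathrm{Id}-S_n)\psi$. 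Your write-up in fact makes explicit the frequency bookkeeping that the paper leaves implicit in the term $C2^{n}\int_0^t\|\mathbf{v}\|_{B^{s-1}_{p,r}}\,\dd\tau$.
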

\begin{proof}  Theorem \ref{th1} tells us that $\mathbf{S}^{\ep_n}_{t}(\psi+u^{n,k}_0),\mathbf{S}^{\ep_n}_{t}(S_n\psi+u^{n,k}_0)\in \mathcal{C}([0,T];B^s_{p,r})$ and has common lifespan $T\thickapprox1$. Moreover, there holds
\bbal
\f\|\mathbf{S}^{\ep_n}_{t}(\psi+u^{n,k}_0),\mathbf{S}^{\ep_n}_{t}(S_n\psi+u^{n,k}_0)\g\|_{L^\infty_TB^s_{p,r}}\leq C.
\end{align*}
We set $\mathbf{v}:=\mathbf{S}^{\ep_n}_{t}(\psi+u^{n,k}_0)-\mathbf{S}^{\ep_n}_{t}(S_n\psi+u^{n,k}_0)$.
It is obvious that $\mathbf{v}$ solves
\begin{equation*}
\begin{cases}
\partial_t\mathbf{v}-\ep_n\Delta \mathbf{v}+\mathbf{S}^{\ep_n}_{t}(\psi+u^{n,k}_0)\cdot\nabla\mathbf{v}
=\mathcal{Q}\f(\mathbf{S}^{\ep_n}_{t}(\psi+u^{n,k}_0)\cdot\nabla\mathbf{v}\g)
-\mathcal{P}\f(\mathbf{v}\cdot\nabla\mathbf{S}^{\ep_n}_{t}(S_n\psi+u^{n,k}_0)\g),\\
\mathbf{v}(0,x)=(\mathrm{Id}-S_n)\psi.
\end{cases}
\end{equation*}
By Lemma \ref{cs}, and then Lemma \ref{lem:P}, we obtain that
\bbal
\f\|\mathbf{v}\g\|_{B^{s-1}_{p,r}}&\leq \|(\mathrm{Id}-S_n)\psi\|_{B^{s-1}_{p,r}}+C\int_0^t\f\|\mathbf{v}\g\|_{B^{s-1}_{p,r}}\dd\tau
\end{align*}
and
\bbal
\f\|\mathbf{v}\g\|_{B^{s}_{p,r}}
\leq&~ \f\|(\mathrm{Id}-S_n)\psi\g\|_{B^{s}_{p,r}}
+C2^n\int^t_0\|\mathbf{v}\|_{B^{s-1}_{p,r}}\dd \tau+C\int^t_0\|\mathbf{v}\|_{B^{s}_{p,r}}\dd \tau,
\end{align*}
which gives the desired result of Proposition \ref{pro5-1}.
\end{proof}

Now, we prove Theorem \ref{th3} by dividing it into three steps.

{\bf Step 1.}\; By Theorem \ref{th2}, we have for a short time $t\in[0,T_0]$
\bbal
\f\|\mathbf{S}^{\ep_n}_{t}(u^{n,k}_{0})-\mathbf{S}^{0}_{t}(u^{n,k}_{0})\g\|_{B^s_{p,r}}\geq c_0t>0.
\end{align*}

{\bf Step 2.}\;
Notice that $\mathbf{S}^{\ep_n}_{t}(u^{n,k}_0)=\mathbf{S}^{\ep_n}_{t}(u^{n,0}_0)(t,x-k\mathbf{e})$. For fixed $n$ and any $(t,x)$, thanks to the decay, we have
$\lim_{|k|\rightarrow \infty} \mathbf{S}^{\ep_n}_{t}(u^{n,k}_0)=0.
$
By the Lebesgue Dominated Convergence Theorem, we deduce that
\bbal
&\lim_{|k|\rightarrow \infty} \int_0^{T_0}\f\|\mathbf{S}^{\ep_n}_{\tau}(u^{n,k}_0)\g\|_{B^{s}_{p,r}}\dd \tau=0.
\end{align*}
By Proposition \ref{pro5-2}, for fixed $n$ and $t\in[0,T_0]$, one can find $k$ with $|k|$ sufficiently large such that
\bbal
\f\|\mathbf{S}^{\ep_n}_{t}(S_n\psi+u^{n,k}_0)-\mathbf{S}^{\ep_n}_{t}(S_n\psi)
-\mathbf{S}^{\ep_n}_{t}(u^{n,k}_0)\g\|_{B^{s}_{p,r}}\leq \fr{1}{n}.
\end{align*}

{\bf Step 3.}\; We decompose the difference of $\mathbf{S}^{\ep_n}_{t}(\psi+u^{n,k}_{0})$ and $\mathbf{S}^{0}_{t}(\psi+u^{n,k}_{0})$ as follows
\bbal
\mathbf{S}^{\ep_n}_{t}(&\psi+u^{n,k}_{0})-\mathbf{S}^{0}_{t}(\psi+u^{n,k}_{0})
=\mathbf{S}^{\ep_n}_{t}(u^{n,k}_{0})-\mathbf{S}^{0}_{t}(u^{n,k}_{0})+\mathbf{S}^{\ep_n}_{t}(S_n\psi)-\mathbf{S}^{0}_{t}(S_n\psi)\\
&\quad+\left[\underbrace{\mathbf{S}^{\ep_n}_{t}(\psi+u^{n,k}_{0})-\mathbf{S}^{\ep_n}_{t}(S_n\psi+u^{n,k}_{0})}_{:=\mathbf{I}_1}
+\underbrace{\mathbf{S}^{\ep_n}_{t}(S_n\psi+u^{n,k}_{0})-\mathbf{S}^{\ep_n}_{t}(S_n\psi)
-\mathbf{S}^{\ep_n}_{t}(u^{n,k}_{0})}_{:=\mathbf{I}_2}\right]\\
&\quad-\left[\underbrace{\mathbf{S}^{0}_{t}(\psi+u^{n,k}_{0})-\mathbf{S}^{0}_{t}(S_n\psi+u^{n,k}_{0})}_{:=\mathbf{I}_3}
+\underbrace{\mathbf{S}^{0}_{t}(S_n\psi+u^{n,k}_{0})-\mathbf{S}^{0}_{t}(S_n\psi)
-\mathbf{S}^{0}_{t}(u^{n,k}_{0})}_{:=\mathbf{I}_4}\right]
\end{align*}
and
\bbal\mathbf{S}^{\ep_n}_{t}(S_n\psi)-\mathbf{S}^{0}_{t}(S_n\psi)
=\mathbf{S}^{\ep_n}_{t}(\psi)-\mathbf{S}^{0}_{t}(\psi)
+\underbrace{\mathbf{S}^{\ep_n}_{t}(S_n\psi)-\mathbf{S}^{\ep_n}_{t}(\psi)}_{:=\mathbf{I}_5}
+\underbrace{\mathbf{S}^{0}_{t}(\psi)-\mathbf{S}^{0}_{t}(S_n\psi)}_{:=\mathbf{I}_6}.
\end{align*}
Hence, from Proposition \ref{pro5-1} and {\bf Step 2} we deduce
\bbal
\f\|\mathbf{S}^{\ep_n}_{t}(\psi+u^{n,k}_{0})-\mathbf{S}^{0}_{t}(\psi+u^{n,k}_{0})\g\|_{B^s_{p,r}}
&\geq \f\|\mathbf{S}^{\ep_n}_{t}(u^{n,k}_{0})-\mathbf{S}^{0}_{t}(u^{n,k}_{0})\g\|_{B^s_{p,r}}
-\f\|\mathbf{S}^{\ep_n}_{t}(\psi)-\mathbf{S}^{0}_{t}(\psi)\g\|_{B^s_{p,r}}-\sum_{i=1}^6\|\mathbf{I}_i\|_{B^s_{p,r}}\\
&\geq \f\|\mathbf{S}^{\ep_n}_{t}(u^{n,k}_{0})-\mathbf{S}^{0}_{t}(u^{n,k}_{0})\g\|_{B^s_{p,r}}-\f\|\mathbf{S}^{\ep_n}_{t}(\psi)-\mathbf{S}^{0}_{t}(\psi)\g\|_{B^s_{p,r}}
\\& \quad-C\f\|(\mathrm{Id}-S_n)\psi\g\|_{B^{s}_{p,r}}-\fr{1}{n},
\end{align*}
which follows from {\bf Step 1} and Theorem \ref{th1} that for $t$ small enough
\bbal
\liminf_{n\rightarrow \infty}\f\|\mathbf{S}^{\ep_n}_{t}(\psi+u^{n,k}_{0})-\mathbf{S}^{0}_{t}(\psi+u^{n,k}_{0})\g\|_{B^s_{p,r}}\geq \fr{c_0t}{2}.
\end{align*}
This completes the proof of Theorem \ref{th3}. {\hfill $\square$}
\section*{Acknowledgements}
J. Li is supported by the National Natural Science Foundation of China (11801090 and 12161004) and Jiangxi Provincial Natural Science Foundation (20212BAB211004 and 20224BAB201008). Y. Yu is supported by the National Natural Science Foundation of China (12101011). W. Zhu is supported by the National Natural Science Foundation of China (12201118) and Guangdong
Basic and Applied Basic Research Foundation (2021A1515111018).

\section*{Declarations}
\noindent\textbf{Data Availability} No data was used for the research described in the article.

\vspace*{1em}
\noindent\textbf{Conflict of interest}
The authors declare that they have no conflict of interest.

\end{document}